\documentclass{article}
\usepackage[utf8]{inputenc}
\usepackage{xcolor}

\usepackage{amsmath}
\usepackage[pdftex]{graphicx}
\usepackage{amsfonts}
\usepackage{amssymb}
\usepackage{wrapfig}
\usepackage{setspace}
\usepackage{epsfig}
\usepackage{color}
\usepackage{tikz}
\usepackage{cancel}
\usepackage{float}
\usepackage{url}
\usepackage{hyperref}
\usepackage{authblk}
\usepackage{fullpage}

\usepackage{amsthm}
\usepackage{wasysym}
\usepackage{tkz-graph}
\usepackage{tikzscale}
\usepackage{subcaption}
\usepackage{mathtools}
\usepackage{todonotes}

\usetikzlibrary{positioning}
\usetikzlibrary{cd}

\theoremstyle{definition}
\newtheorem{definition}{Definition}[section]

\newtheorem{theorem}{Theorem}[section]
\newtheorem{corollary}{Corollary}[theorem]
\newtheorem{lemma}[theorem]{Lemma}
\newtheorem{prop}[theorem]{Proposition}

\theoremstyle{plain}

\newtheoremstyle{case}{}{}{}{}{}{:}{ }{}
\theoremstyle{case}

\newcommand{\N}{\mathbb{N}}

\newcommand{\Z}{\mathbb{Z}}

\newcommand{\Prob}{\mathbb{P}}

\newcommand{\gn}{\text{gn}}
\newcommand{\ex}{\text{ex}}
\newcommand{\sat}{\text{sat}}
\newcommand{\Ex}{\text{Ex}}
\newcommand{\Sat}{\text{Sat}}
\newcommand{\fix}{\text{fix}}
\newcommand{\cp}{\text{cp}}

\title{Guessing Numbers and Extremal Graph Theory}
\author{Jo Martin and Puck Rombach}
\affil{\footnotesize{Department of Mathematics \& Statistics, University of Vermont, VT, USA.}}
\date{\today}

\begin{document}

\maketitle

\begin{abstract}
For a given number of colors, $s$, the guessing number of a graph is the (base $s$) logarithm of the cardinality of the largest family of colorings of the vertex set of the graph such that the color of each vertex can be determined from the colors of the vertices in its neighborhood. This quantity is related to problems in network coding, circuit complexity and graph entropy. We study the guessing number of graphs as a graph property in the context of classic extremal questions, and its relationship to the forbidden subgraph property. We find the extremal number with respect to the property of having guessing number $\leq a$, for fixed $a$. Furthermore, we find an upper bound on the saturation number for this property, and a method to construct further saturated graphs that lie between these two extremes. We show that, for a fixed number of colors, bounding the guessing number is equivalent to forbidding a finite set of subgraphs.
\end{abstract}

\section{Introduction}
The guessing number of a graph is a graph invariant introduced by S\o ren Riis, as a tool to work on problems in network coding \cite{riis_utilising_2005} and circuit complexity \cite{riis_information_2006}. It is one of many other variants of multiplayer information games, such as the hat guessing game, Ebert's game, hats-on-a-line game. For a review, see~\cite{butler_hat_2009}. We will give a formal definition of the guessing game in Section~\ref{sec:defs}. Informally, imagine that $n$ players are positioned on the vertices of an undirected graph $G$. Two players can see each other if their vertices share an edge in $G$. Each player is assigned a hat with a color chosen uniformly from a set of $s$ colors, independently of other players. The players guess the color of their own hats simultaneously, where the goal is to maximize the probability that all players guess correctly. Players cannot see their own hats. Instead, they base their guess on a previously agreed upon strategy and the colors of the other players' hats that they can see. The guessing number $\gn (G)$ reflects the quality of a best possible guessing strategy on the graph $G$.

Riis proved that computing this particular guessing number of a graph is equivalent to solving an information flow problem on an associated network \cite{riis_utilising_2005}. In particular, this refers to the solvability of the multiple unicast coding problem~\cite{ahlswede2000network}. It is also related to the problem of index coding with side information~\cite{alon2008broadcasting,gadouleau_graph-theoretical_2011}. Gadouleau showed that this problem can also be recast in terms of fixed points of finite dynamical systems~\cite{gadouleau2018possible}. Christofides and Markstr\"{o}m were the first to expand the study of guessing numbers of undirected graphs \cite{christofides_guessing_2011}. They found the exact guessing numbers of a class of graphs that contains the perfect graphs, namely the graphs whose independence number equals the clique cover number of their complements. The guessing numbers of undirected triangle free graphs \cite{cameron_guessing_2016} and odd cycles \cite{atkins_guessing_2017} have also been studied, but very few other graphs have known guessing number. 

Extremal graph theory is a well-studied area of graph theory that concerns itself with how large (or small) a graph can be while fulfilling certain properties. The traditional T\'{u}ran problem asks how many edges a graph on $n$ vertices can have, while avoiding a subgraph isomorphic to some $F$, or to any $F$ in a given family $\mathcal{F}$. This type of question was introduced by Mantel~\cite{mantel_problem_1907}, and solved by T\'{u}ran for complete graphs~\cite{turan1941extremal}. This is a fundamental question in combinatorics which has been studied extensively since then. For a survey, see for example~\cite{sidorenko1995we}. Similarly, the original saturation problem asks the question of how few edges an $F$-free graph on $n$ vertices can have while having the property that the addition of any edge creates gives rise to a subgraph $F$, or to any $F$ in a given family $\mathcal{F}$. The saturation number was introduced by Erd\H{o}s, Hajnal and Moon in~\cite{erdos1964problem}. For a survey, see for example~\cite{faudree2011survey}. 

In this paper we look at extremal and saturation questions of the guessing number. We define, in terms that parallel prior extremal work on subgraphs, extremal and saturation numbers for the guessing number, and then determine the extremal number as well as a constant bound for the saturation number. These questions are of interest, especially when we think of the guessing number as it relates to the efficiency of a network in terms of its ability to transmit a message versus the number of links that are used. The graph property of having guessing number at least $a$ is equivalent to the property of avoiding a finite family of subgraphs $\mathcal{F}_{a}$.

The remainder of this paper is organized as follows. In section~\ref{sec:defs}, we give formal definitions related to guessing numbers, strategies, and extremal and saturation numbers, as well as a few useful lemmas. In Section~\ref{sec:ext}, we present the extremal number for graphs of bounded guessing number. This result does not depend on the number of colors used. In Section~\ref{sec:sat}, we provide a constant upper bound on the saturation number (that only depends on the guessing number $a$, not on $n$) that applies to any number of colors. The saturation number may depend on the number of colors used, unlike the extremal numbers. In Section~\ref{sec:spect}, we discuss a method of building further saturated graphs. In Section~\ref{sec:Fgneq}, we look further into the relationship between the bounded guessing number property and forbidden subgraphs, and show that for a fixed number of colors, bounding the guessing number is equivalent to forbidding a finite set of subgraphs.

\section{Definitions and useful results}\label{sec:defs}

This section is split into three subsections, which deal with guessing numbers, saturation and extremal numbers, and with graph entropy, respectively.

\subsection{Guessing Numbers}
In 2006, S\o ren Riis introduced a new guessing game variant played on directed graphs. This guessing game was originally developed by Riis and Mikkel Thorup in 1997 \cite{riis_utilising_2005}. Similarly to some of the other games, players are assigned hat colors at random, can decide on a strategy beforehand but cannot communicate after the hats have been assigned, and all guess simultaneously. Riis introduces a new win condition: The players are trying to maximize the probability that \textit{every} player guesses correctly. 

We will consider only undirected graphs. A graph $G$ is a pair $(V,E)$, where $V=\{ v_i\}_1^n$ is a set of vertices, and $E \subseteq \binom{V}{2}$ a set of edges. We will use the convention that $n=|V(G)|$ and $m=|E(G)|$. We let $N(v)$ denote the \emph{neighborhood} of a vertex $v$, \emph{i.e.} $N(v)=\{w \in V(G):\; {v,w} \in E(G) \}$. We let $N[v]=N(v)\cup \{ v \}$ denote the closed neighborhood of $v$. We use $K_n$ to denote the complete graph on $n$ vertices, and $E_n$ the empty graph on $n$ vertices. We use the symbol $\oplus$ to denote the operator that forms the join of two graphs, and $+$ to denote the disjoint union.

In Riis's guessing game, every vertex $G$ is assigned a color from a color set $[s]$, uniformly at random and independently of other vertices. Each vertex guesses the color that has been assigned to it, based on the information of the colors assigned to its neighbors. The collection of $n$ guessing functions, one for each vertex, is called a strategy or protocol for the guessing game for $G$ with $s$ colors. The goal of the guessing game is to find a protocol that maximises the probability that every vertex guesses its own color.

In this paper we use the following set of definitions related to guessing games and guessing numbers of undirected graphs. These definitions have been slightly modified from Riis's orginal presentation.

\begin{definition}
A \textit{protocol} or \textit{strategy} for graph $G$ with respect to a color set of size $s$ is a set of functions, $\mathcal{P}=\{f_i\}_1^n$ where each $f_i$ is a function $f_i: \Z_s^n \rightarrow \Z_s$ associated with a vertex $v_i\in V(G)$, where $f_i$ may only depend on the colors of the verices in $N(v_i)$. Then we can think of the protocol itself as a function $\mathcal{P}:\Z_s^n\rightarrow \Z_s^n$. 
\end{definition}

We then use the following definition of the guessing number, styled after Christofides and Markström \cite{christofides_guessing_2011}.
For $s\in\N$ and $G$ a graph, we let the \textit{guessing number} be given as $\gn(G,s)=k$, where $k$ is the largest value such that there exists a protocol $\mathcal{P}$ where every vertex guesses its own value with probability $s^{k-n}$.
A more compact definition in terms of the fixed points of a protocol was first introduced Wu, Cameron, and Riis in 2009 \cite{wu_guessing_2009}. A protocol defined above as function $\mathcal{P}:\Z_s^n \rightarrow \Z_s^n$ guesses correctly whenever $\mathcal{P}(c)=c$, or when a coloring $c$ is a fixed point of $\mathcal{P}$. Such colorings are those for which the strategy is successful. This allows us to define the guessing number in terms of the fixed points of a strategy.

\begin{definition}
\label{def:gn}
The \textit{guessing number} of a graph $G$ with respect to an $s$-guessing game is
\begin{equation*}
    \gn(G,s) = \log_s \max_{\mathcal{P}}\{\text{fix}(\mathcal{P})\},
\end{equation*}
where $\text{fix}(\mathcal{P})$ is the number of fixed points of a strategy $\mathcal{P}$.
\end{definition}

\begin{definition}
\label{def:ggn}
The \textit{general guessing number} of a graph $G$ is
\begin{equation*}
    \gn(G) = \sup_s \gn(G,s).
\end{equation*}
\end{definition}

Much of the foundational work purely on the guessing number was done by Christofides and Markstr\"{o}m in 2011. Their initial bounds and exposition on some of the fundamentals of the guessing number of undirected graphs are indispensable for this paper. In their 2001 paper, Christofides and Markstr\"{o}m present a general upper and lower bounds for the guessing number using the clique cover number and independence number of graphs. We present Lemmas~\ref{lem:subgraphadd} and~\ref{lem:kngn}, which together imply the lower bound in Lemma~\ref{lem:cpalpha}.
\begin{lemma} \cite{wu_guessing_2009,christofides_guessing_2011}
\label{lem:subgraphadd}
For two disjoint subgraphs, $H_1,H_2\subseteq G$, we have  \[\gn(G,s)\geq \gn(H_1,s)+\gn(H_2,s).\]
\end{lemma}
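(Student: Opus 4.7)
The plan is to build a strategy $\mathcal{P}$ for $G$ by stitching together optimal strategies on $H_1$ and $H_2$, and then to count fixed points; the inequality will follow by taking $\log_s$ and applying Definition~\ref{def:gn}.

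First I would fix strategies $\mathcal{P}_1 = \{f^1_v\}_{v\in V(H_1)}$ and $\mathcal{P}_2 = \{f^2_v\}_{v\in V(H_2)}$ achieving the maximum, so that $\fix(\mathcal{P}_i) = s^{\gn(H_i,s)}$. The key observation is that, because $H_i$ is a subgraph of $G$, one has $N_{H_i}(v) \subseteq N_G(v)$ for every $v \in V(H_i)$; hence each $f^i_v$ depends only on colors in $N_G(v)$ and is a legal guessing function for $v$ viewed as a vertex of $G$. I would then define $\mathcal{P}$ on $G$ by assigning $f^1_v$ to every $v \in V(H_1)$, $f^2_v$ to every $v \in V(H_2)$, and the constant function $0$ to every remaining vertex. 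Since $H_1$ and $H_2$ are disjoint, these assignments do not interfere.

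Next I would count fixed points. A coloring $c \in \Z_s^{V(G)}$ is a fixed point of $\mathcal{P}$ precisely when $c|_{V(H_1)}$ is a fixed point of $\mathcal{P}_1$, $c|_{V(H_2)}$ is a fixed point of $\mathcal{P}_2$, and $c_v = 0$ for every $v \notin V(H_1) \cup V(H_2)$. Since these three conditions are imposed on pairwise disjoint coordinate sets, they are independent, and so
\begin{equation*}
\fix(\mathcal{P}) \;\geq\; \fix(\mathcal{P}_1) \cdot \fix(\mathcal{P}_2) \cdot 1 \;=\; s^{\gn(H_1,s) + \gn(H_2,s)}.
\end{equation*}
Taking $\log_s$ of both sides and using Definition~\ref{def:gn} for $G$ yields the stated inequality.

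No serious obstacle is expected. The only items worth verifying carefully are that each lifted function $f^i_v$ is still a valid guessing function in $G$ (immediate from $N_{H_i}(v) \subseteq N_G(v)$) and that forcing the vertices outside $V(H_1) \cup V(H_2)$ to a constant value only restricts, rather than expands, the fixed-point set; since this restriction contributes a factor of $1$ to the count, it is harmless for a lower bound. The whole argument is essentially a product-of-strategies construction.
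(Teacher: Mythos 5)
Your proof is correct. The paper cites this lemma from Wu--Cameron--Riis and Christofides--Markstr\"{o}m without reproducing a proof, and your product-of-strategies argument --- lift optimal protocols from $H_1$ and $H_2$ (legal since $N_{H_i}(v)\subseteq N_G(v)$), pad the remaining vertices with constant guesses, and multiply fixed-point counts over disjoint coordinate blocks --- is exactly the standard argument from those references, so there is nothing to add.
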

\begin{lemma} \cite{christofides_guessing_2011}
\label{lem:kngn}
For the complete graph $K_n$, we have
\[ \gn(K_n,s)=n-1. \]
\end{lemma}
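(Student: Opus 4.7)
The plan is to prove the equality by a matching pair of bounds: an upper bound $\gn(K_n,s)\leq n-1$ that actually holds for every graph on $n$ vertices, and a lower bound obtained by exhibiting an explicit optimal protocol on $K_n$.

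For the upper bound, I would fix an arbitrary strategy $\mathcal{P}=\{f_i\}_{i=1}^n$ and any single vertex $v_i$. Since $f_i$ is permitted to depend only on the colors of $N(v_i)$, it is in particular \emph{independent} of the color assigned to $v_i$ itself. Consequently, for each of the $s^{n-1}$ colorings of $V(G)\setminus\{v_i\}$, the value $f_i$ outputs at $v_i$ is completely determined, so at most one extension to a coloring of all of $V(G)$ can satisfy $f_i(c)=c_i$. In particular $\text{fix}(\mathcal{P})\leq s^{n-1}$, and taking $\log_s$ yields $\gn(G,s)\leq n-1$ for every graph $G$, hence for $K_n$.

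For the lower bound, I would use the fact that in $K_n$ every vertex sees every other vertex, and define the linear ``sum-to-zero'' protocol
\begin{equation*}
f_i(c)=-\sum_{j\neq i}c_j\pmod{s}.
\end{equation*}
Each $f_i$ is well-defined on the neighborhood of $v_i$, so $\mathcal{P}=\{f_i\}$ is a valid strategy. A coloring $c$ is a fixed point iff $c_i+\sum_{j\neq i}c_j\equiv 0\pmod{s}$ for every $i$, which collapses to the single linear equation $\sum_{j=1}^n c_j\equiv 0\pmod{s}$. This has exactly $s^{n-1}$ solutions over $\Z_s^n$, giving $\text{fix}(\mathcal{P})=s^{n-1}$ and hence $\gn(K_n,s)\geq n-1$. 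Combined with the upper bound we obtain $\gn(K_n,s)=n-1$.

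There is no real technical obstacle here; the argument is essentially a one-line counting argument once the correct observation is made. The only subtlety worth flagging is getting the upper bound argument phrased correctly, namely emphasizing that $f_i$ does not read $c_i$, so that the fibers of $\mathcal{P}$ over colorings of $V\setminus\{v_i\}$ contain at most one fixed point each. Everything else, including the existence of the explicit optimal protocol, is immediate from the completeness of $K_n$ and elementary linear algebra over $\Z_s$.
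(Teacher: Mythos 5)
Your proof is correct. The paper cites this lemma from Christofides and Markstr\"{o}m without reproducing a proof, and your argument is exactly the standard one: the upper bound $\text{fix}(\mathcal{P})\leq s^{n-1}$ because $f_i$ cannot read $c_i$ (equivalently, the $n-\alpha(G)$ bound of Lemma~\ref{lem:cpalpha} with $\alpha(K_n)=1$), and the lower bound via the sum-to-zero linear protocol whose fixed points are the $s^{n-1}$ solutions of $\sum_j c_j\equiv 0 \pmod{s}$. Nothing is missing.
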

We let $\alpha(G)$ be the independence number of $G$, which is the cardinality of a largest independent set in $G$. We let $\cp (G)$ be the clique decomposition number of $G$, which is the fewest number of classes in a partition of $G$ such that each class induces a clique. This is equal to the chromatic number of the complement of the graph $G$.
\begin{lemma} \cite{christofides_guessing_2011}
\label{lem:cpalpha}
For every graph $G$ on $n$ vertices, 
\begin{equation*}
   n-\cp(G)\leq \gn(G,s)\leq n-\alpha(G).
\end{equation*}
\end{lemma}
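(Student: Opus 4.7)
My plan is to handle the two inequalities separately, using the hint from the excerpt that the lower bound follows from Lemmas~\ref{lem:subgraphadd} and~\ref{lem:kngn}.

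For the lower bound $n-\cp(G)\leq \gn(G,s)$, I would take an optimal clique partition of $V(G)$ into $k=\cp(G)$ cliques of sizes $n_1,\ldots,n_k$ with $\sum n_i = n$. These induce vertex-disjoint complete subgraphs $K_{n_1},\ldots,K_{n_k}$ of $G$. Iterating Lemma~\ref{lem:subgraphadd} gives $\gn(G,s)\geq \sum_{i=1}^k \gn(K_{n_i},s)$, and applying Lemma~\ref{lem:kngn} to each term yields $\sum_{i=1}^k (n_i-1) = n-k = n-\cp(G)$. (A brief remark is needed for the degenerate case $n_i=1$, where $\gn(K_1,s)=0$, which is consistent with the formula.)

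For the upper bound $\gn(G,s)\leq n-\alpha(G)$, I would fix an arbitrary protocol $\mathcal{P}=\{f_i\}_{i=1}^n$ and a maximum independent set $I\subseteq V(G)$ with $|I|=\alpha(G)$. The key observation is that since $I$ is independent, for every $v_i\in I$ we have $N(v_i)\subseteq V(G)\setminus I$, so the guess $f_i$ depends only on the coloring restricted to $V(G)\setminus I$. Therefore, once a coloring of $V(G)\setminus I$ is chosen, the values $f_i$ assigned at every $v_i\in I$ are determined; hence there is at most one extension to a coloring of all of $V(G)$ that is a fixed point of $\mathcal{P}$ (namely, the one where each $v_i\in I$ takes the value output by its own $f_i$). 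Summing over the $s^{n-\alpha(G)}$ colorings of $V(G)\setminus I$ gives $\fix(\mathcal{P})\leq s^{n-\alpha(G)}$, and taking $\log_s$ and maximizing over $\mathcal{P}$ yields the bound.

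Neither direction presents a serious obstacle: the lower bound is essentially a packaging of existing lemmas, and the upper bound reduces to the structural observation that guesses at an independent set are entirely determined by colors outside that set, giving a clean counting argument. The only subtle point I would be careful about is articulating precisely why the extension of a coloring of $V(G)\setminus I$ to a fixed point, if it exists, is unique, so that the counting $\fix(\mathcal{P})\leq s^{n-\alpha(G)}$ is justified rather than merely plausible.
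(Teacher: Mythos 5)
Your proof is correct and matches the intended argument: the paper does not prove this cited result itself, but explicitly notes that the lower bound follows from Lemmas~\ref{lem:subgraphadd} and~\ref{lem:kngn} exactly as you combine them over a minimum clique partition, and your upper bound is the standard counting argument (restriction of a fixed point to $V(G)\setminus I$ is injective because the guesses on an independent set $I$ are determined by the colors outside $I$) used in the source. No gaps; your care about uniqueness of the extension is precisely the point that makes $\text{fix}(\mathcal{P})\leq s^{n-\alpha(G)}$ rigorous.
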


\subsection{Extremal and saturation numbers}
We first define the extremal and saturation number in their traditional forms, in terms of forbidden subgraphs. Let $\mathcal{F}$ be a family of graphs. We say that a graph $G$ is $\mathcal{F}$-saturated if $G$ does not contain any graph $F \in \mathcal{F} $ as a subgraph, but for any $e\in \overline{E(G)}$ we have that $G+e$ does contain a subgraph $F \in \mathcal{F} $. If $G$ is $\mathcal{F}$-saturated with $\mathcal{F}=\{ F\}$, we say that $G$ is $F$-saturated.

\begin{definition}\label{def:exF}
The extremal number $\ex (n,\mathcal{F})$ (resp.,  $\ex (n,F)$) is the maximum number of edges over all graphs on $n$ vertices that are $\mathcal{F}$-free (resp., $F$-free). The family of such graphs on $n$ vertices and the extremal number of edges is denoted by $\Ex(n,\mathcal{F})$ (resp.,  $\Ex (n,F)$).
\end{definition}
Note that all graphs in $\Ex(n,\mathcal{F})$ (resp.,  $\Ex (n,F)$) must be  $\mathcal{F}$-saturated (resp., $F$-saturated). 

\begin{definition}\label{def:satF}
The saturation number $\sat (n,\mathcal{F})$ (resp.,  $\sat (n,F)$) is the minimum number of edges over all graphs on $n$ vertices that are $\mathcal{F}$-saturated (resp., $F$-saturated). The family of graphs on $n$ vertices and the saturation number of edges is denoted by $\Sat(n,\mathcal{F})$ (resp.,  $\Sat (n,F)$).
\end{definition}

Similarly, we say that a graph $G$ is $(\gn_s\geq a)$-saturated if $\gn_s(G)< a$ and for any $e\in \overline{E(G)}$ we have that $gn_s(G+e)\geq a$. We then, in a logical way, define the extremal and saturation values of a guessing number in the spirit of traditional definitions with respect to forbidden subgraphs.

\begin{definition}\label{def:exgn}
The extremal number $\ex(n,\gn_s \geq a)$ is the maximum number of edges over all graphs on $n$ vertices that have guessing number $< a$. The family of such graphs on $n$ vertices and the extremal number of edges is denoted by $\Ex(n,\gn_s\geq a)$.
\end{definition}
Note that all graphs in the family $\Ex(n,\gn_s\geq a)$ are $(\gn_s\geq a)$-saturated.

\begin{definition}\label{def:satgn}
The saturation number $\sat(n,\gn_s \geq a)$ is the minimum number of edges over all graphs on $n$ vertices that are $(\gn_s\geq a)$-saturated. The family of such graphs on $n$ vertices and the saturation number of edges is denoted by $\Sat(n,\gn_s\geq a)$.
\end{definition}

The graph properties of attaining a given guessing number and containing a subgraph from a given family of graphs are strongly related. In Lemma~\ref{lem:Fgneq}, we will show that for every $s \in \mathbb{N}$, $a \in \mathbb{R}$, there exists a unique finite family of minimal forbidden subgraphs $\mathcal{F}_{s,a}$ such that, for any graph $G$,
\[ \gn_s(G)< a \;\; \Leftrightarrow \;\; G \mbox{ is }\mathcal{F}_{s,a}\mbox{-free}. \] 

 Given this fact, the reader might wonder why we need separate definitions for the extremal and saturation numbers with respect to forbidden subgraphs and guessing numbers, respectively. One reason is that, although we know that a graph family $\mathcal{F}_{s,a}$ exists, in most cases we do not know what this family is. Another reason is that we do not have a monotonicity between these two properties. For example, when $a\leq b$, we need not have $\mathcal{F}_{s,a} \subseteq \mathcal{F}_{s,b}$, or that every graph in $\mathcal{F}_{s,a}$ is contained in some graph in $\mathcal{F}_{s,b}$. 

\begin{lemma}\label{lem:monFgn}
 If $a\leq b$, then for every graph $F_b \in \mathcal{F}_{s,b} $ there exists an $F_a \in \mathcal{F}_{s,a}$ such that $F_a \subseteq F_b$.
\end{lemma}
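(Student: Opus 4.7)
The plan is to derive this directly from the characterization provided by Lemma~\ref{lem:Fgneq}, which we may invoke since it is stated earlier in the excerpt. That lemma guarantees that $\mathcal{F}_{s,a}$ is precisely the family of minimal subgraphs such that $\gn_s(G) \geq a$ if and only if $G$ contains some $F \in \mathcal{F}_{s,a}$. Crucially, the characterization works in both directions $a$ and $b$, so the argument reduces to a simple monotonicity chase.

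First, I would take an arbitrary $F_b \in \mathcal{F}_{s,b}$. By the definition of $\mathcal{F}_{s,b}$, the graph $F_b$ is not $\mathcal{F}_{s,b}$-free (it contains itself as a subgraph), and hence $\gn_s(F_b) \geq b$. Then, since $a \leq b$, we immediately get
\begin{equation*}
\gn_s(F_b) \;\geq\; b \;\geq\; a.
\end{equation*}
Applying the contrapositive of the characterization in Lemma~\ref{lem:Fgneq} at the value $a$, the inequality $\gn_s(F_b) \geq a$ forces $F_b$ to fail to be $\mathcal{F}_{s,a}$-free. Therefore there exists some $F_a \in \mathcal{F}_{s,a}$ with $F_a \subseteq F_b$, which is exactly the desired conclusion.

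There is essentially no obstacle here beyond correctly invoking Lemma~\ref{lem:Fgneq}; the lemma packages the nontrivial content (the existence and finiteness of $\mathcal{F}_{s,a}$), so the present statement becomes a one-line consequence of the biconditional together with the monotonicity of the threshold $a \mapsto (\gn_s \geq a)$. One pedagogical point worth making in the write-up is that this lemma justifies the paper's remark immediately preceding it: even though an $F_b$ must contain some $F_a$, we do not claim any containment between the families themselves, nor that each $F_a$ sits inside some $F_b$, so the one-sided containment here is genuinely the correct form of monotonicity.
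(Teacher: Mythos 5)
Your proposal is correct and is essentially the paper's own argument: both deduce $\gn_s(F_b)\geq b\geq a$ from the characterization of $\mathcal{F}_{s,b}$ and then apply the biconditional at level $a$ to conclude $F_b$ is not $\mathcal{F}_{s,a}$-free; the paper merely phrases this as a proof by contradiction while you state the contrapositive directly. No gap to report.
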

\begin{proof}
For the sake of contradiction, suppose that $a \leq b$, and that there exists an $F_b\in \mathcal{F}_{s,b} $ such that no graph in $\mathcal{F}_{s,a}$ is a subgraph of $F_b$. This implies that $\gn_s (F_b) \geq b$, by the definition of $\mathcal{F}_{s,b}$, but also that $\gn_s (F_b) < a$, since it is $\mathcal{F}_{s,a}$-free. This is a contradiction.
\end{proof}

We will discuss a few aspects of the behavior of the extremal and saturation functions which are also seen in the well-studied setting of forbidden subgraphs. The following properties of the extremal 
\begin{lemma}\label{lem:monF}
For every $F' \subseteq F$ and every $\mathcal{F}' \subseteq \mathcal{F}$,
\begin{itemize}
    \item[(i)] $\ex (n,F') \leq \ex (n,F)$,
    \item[(ii)] $\ex (n,\mathcal{F}') \leq \ex (n,\mathcal{F})$,
    \item[(iii)] $\ex (n,\mathcal{F}) \leq \ex (n+1,\mathcal{F})$.
\end{itemize}
\end{lemma}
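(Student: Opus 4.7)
The plan is to prove each of the three parts by directly comparing the families of graphs that enter the definition of the extremal number on the two sides of the inequality, and in the last part by also constructing an explicit extension by one vertex.

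For part (i), I would argue as follows. If $F' \subseteq F$, then any embedding of $F$ into a host graph $G$ restricts to an embedding of $F'$ into $G$; contrapositively, every $F'$-free graph is also $F$-free. This gives an inclusion of the set of $n$-vertex $F'$-free graphs into the set of $n$-vertex $F$-free graphs, and taking the maximum edge count over each class yields $\ex(n, F') \leq \ex(n, F)$ directly from Definition~\ref{def:exF}.

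For part (ii), I would follow the analogous template, lifting the argument from single graphs to families. Since $\mathcal{F}' \subseteq \mathcal{F}$, being $\mathcal{F}$-free and being $\mathcal{F}'$-free are related by a straightforward containment between their respective classes of $n$-vertex graphs (an avoidance of every member of one family is compared to an avoidance of every member of the other). The stated inequality should then follow by taking the maximum edge count over each class, with a careful reconciliation of the direction of the set inclusion against the direction of the stated inequality.

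For part (iii), I would take an extremal $\mathcal{F}$-free graph $G$ on $n$ vertices with $e(G) = \ex(n, \mathcal{F})$ and form the disjoint union $G' = G + K_1$, which has $n+1$ vertices and the same edge count as $G$. In the standard case where no $F \in \mathcal{F}$ has an isolated vertex, any embedding of $F$ into $G'$ cannot use the new degree-zero vertex and must therefore already be an embedding into $G$; so $G'$ remains $\mathcal{F}$-free and $\ex(n+1, \mathcal{F}) \geq e(G') = \ex(n, \mathcal{F})$. A separate short argument would handle the degenerate case in which $\mathcal{F}$ contains graphs with isolated vertices.

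The main obstacle will be part (ii), where I need to line up the natural set inclusion between the classes of $\mathcal{F}'$-free and $\mathcal{F}$-free graphs with the direction of the claimed edge-count inequality; the other two parts reduce to either a one-line subgraph argument or a one-vertex extension.
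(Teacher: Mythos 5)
The paper itself offers no proof of this lemma --- it is stated as standard background (the sentence introducing it even trails off), so your proposal can only be judged on its own merits. Part (i) is correct and is the standard argument. But the obstacle you flag in part (ii) is not a bookkeeping issue to be ``reconciled''; it is fatal under the literal reading of $\mathcal{F}' \subseteq \mathcal{F}$ as set containment. Forbidding \emph{more} graphs is a \emph{stronger} restriction, so every $\mathcal{F}$-free graph is $\mathcal{F}'$-free, and the class inclusion yields $\ex(n,\mathcal{F}) \leq \ex(n,\mathcal{F}')$ --- the reverse of claim (ii). Indeed (ii) is false as literally stated: take $\mathcal{F}' = \{K_4\}$ and $\mathcal{F} = \{K_3, K_4\}$; then $\ex(n,\mathcal{F}')$ is the size of the Turán graph $T_3(n)$, roughly $n^2/3$, while $\ex(n,\mathcal{F}) = \ex(n,K_3) = \lfloor n^2/4 \rfloor$, since avoiding $K_3$ already forces avoidance of $K_4$. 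What is evidently intended --- and what the paper actually uses, via Lemma~\ref{lem:monFgn}, to derive Corollary~\ref{cor:monexgn}(i) --- is the pointwise analogue of (i): if every $F \in \mathcal{F}$ contains some $F' \in \mathcal{F}'$ as a subgraph, then any graph containing a member of $\mathcal{F}$ contains a member of $\mathcal{F}'$, so every $\mathcal{F}'$-free graph is $\mathcal{F}$-free and $\ex(n,\mathcal{F}') \leq \ex(n,\mathcal{F})$ follows exactly as in your part (i). You should prove that version; no proof of the literal version exists.

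In part (iii), your construction $G' = G + K_1$ is the right one and works whenever no member of $\mathcal{F}$ has an isolated vertex, but the ``separate short argument'' you defer for the degenerate case does not exist, because (iii) is also false in that generality: take $\mathcal{F} = \{K_2 + K_1\}$ (an edge plus an isolated vertex). Then $\ex(2,\mathcal{F}) = 1$, since a $3$-vertex graph cannot embed in a $2$-vertex one, while $\ex(3,\mathcal{F}) = 0$, since any edge together with any third vertex is a copy of $K_2 + K_1$. So you must either adopt the standard convention that forbidden graphs contain no isolated vertices, or restrict (iii) to such families. This restriction is harmless for the paper's purposes: a minimal forbidden subgraph for the property $\gn_s(G) < a$ never contains an isolated vertex, since deleting an isolated vertex leaves the guessing number unchanged, contradicting minimality.
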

In the case of saturation, none of the above monotonic behaviors hold necessarily.  See~\cite{kaszonyi_saturated_1986} or~\cite{faudree2011survey}, for a survey. For guessing numbers, we will discuss analogous types of monotonicity in Sections~\ref{sec:ext} and~\ref{sec:sat}.

\subsection{Graph Entropy}

In~\cite{riis_graph_2007}, Riis develops the concept of graph entropy and connects it to the guessing number. This result allows us to use entropy inequalities to calculate the guessing number of graphs. 

\begin{definition}
\label{def:entropy}
Let $\{X_i\}_1^n$ be a collection of random variables each taking values from the same finite set $A$. Then for some appropriately chosen base, $b$, the information entropy (or Shannon's entropy) of the collection $\{X_i\}_1^n$ is defined as
\begin{equation*}
    H(\{X_i\}_1^n) =-\sum_{\mathclap{\left(x_1,\ldots, x_n\right) \in A^n }} \Prob(X_1=x_1,\ldots X_n=x_n)\log_b \Prob(X_1=x_1,\ldots X_n=x_n)
\end{equation*}
\end{definition}

The following lemma summarizes basic properties of entropy. 

\begin{lemma}\cite{shannon_mathematical_1948}\label{lem:shannon}
For $X,Y$ random variables, we have
\[0 \le H(X) \le H(X,Y).\]
\end{lemma}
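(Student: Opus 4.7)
The plan is to establish each inequality separately using standard manipulations of the defining sum in Definition \ref{def:entropy}.

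For the lower bound $0 \le H(X)$, I would observe that every probability $p(x) = \Prob(X = x)$ lies in $[0,1]$, so $\log_b p(x) \le 0$, and hence each summand $-p(x)\log_b p(x)$ is non-negative (adopting the usual convention $0 \log_b 0 = 0$). The sum is therefore non-negative.

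For the upper bound $H(X) \le H(X,Y)$, the cleanest route is to subtract and recognize a conditional entropy. Writing $p(x,y) = \Prob(X=x, Y=y)$ and $p(x) = \sum_y p(x,y)$, I would compute
\begin{align*}
H(X,Y) - H(X) &= -\sum_{x,y} p(x,y)\log_b p(x,y) + \sum_x p(x) \log_b p(x) \\
&= -\sum_{x,y} p(x,y)\log_b p(x,y) + \sum_{x,y} p(x,y) \log_b p(x) \\
&= -\sum_{x,y} p(x,y) \log_b \frac{p(x,y)}{p(x)}.
\end{align*}
The ratio $p(x,y)/p(x) = \Prob(Y = y \mid X = x)$ is again a probability in $[0,1]$, so its logarithm is non-positive, and the displayed sum is therefore non-negative. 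This gives $H(X,Y) \ge H(X)$ and completes the proof.

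There is no real obstacle here beyond bookkeeping: the only subtlety is handling terms where $p(x) = 0$ (in which case $p(x,y) = 0$ for all $y$, and the associated summands vanish under the $0 \log_b 0 = 0$ convention). Both inequalities ultimately reduce to the fact that $-t \log_b t \ge 0$ for $t \in [0,1]$.
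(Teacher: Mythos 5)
Your proof is correct. The paper offers no proof of this lemma --- it is imported directly from Shannon's 1948 paper as a known fact --- so there is nothing in the text to compare against; your derivation (term-by-term non-negativity of $-p\log_b p$ for the lower bound, and identifying $H(X,Y)-H(X)$ as the conditional entropy $-\sum_{x,y}p(x,y)\log_b\bigl(p(x,y)/p(x)\bigr)$, again a sum of non-negative terms, for the upper bound) is the standard argument and is complete, including the correct treatment of the $p(x)=0$ terms.
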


\begin{definition}
For a graph $G$ and positive integer $s$, let $\mathcal{P}$ be a nontrivial strategy. Let $X_\mathcal{P}=(X_1,X_2,\ldots, X_n)$ be a random variable representing picking a coloring uniformly at random from the fixed points of $\mathcal{P}$ where $X_i$ is the color of vertex $i$.
\end{definition}

\begin{lemma} \cite{riis_graph_2007}
\label{lem:entpop}
For a graph $G$ positive integer $s$, and strategy $\mathcal{P}$, consider $X_\mathcal{P}$ and an arbitrary subset $S=\{v_1,\dots,v_t\}\subseteq V(G)$, without loss of generality. If $N(v_1)\subseteq S$, then

\begin{equation*}
    H(\{X_i\}_1^t)=H(\{X_i\}_2^t).
\end{equation*}
\end{lemma}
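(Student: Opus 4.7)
The plan is to exploit the fact that if $c=(c_1,\dots,c_n)$ is any fixed point of the strategy $\mathcal{P}$, then by definition $c_i = f_i(c_{N(v_i)})$ for every $i$. In particular, applying this at $v_1$, the coordinate $c_1$ is a deterministic function of the coordinates indexed by $N(v_1)$. Since by hypothesis $N(v_1)\subseteq S$, and $v_1\notin N(v_1)$, we in fact have $N(v_1)\subseteq\{v_2,\dots,v_t\}$, so $c_1$ is determined by $(c_2,\dots,c_t)$. Because $X_\mathcal{P}$ is uniform on the fixed points, the projection $(X_1,\dots,X_t)$ has the property that $X_1$ is a function of $(X_2,\dots,X_t)$.

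Next I would convert this functional dependence into an entropy identity. Let $g$ denote the function with $X_1 = g(X_2,\dots,X_t)$ almost surely. For any tuple $(y_2,\dots,y_t)$ in the support of $(X_2,\dots,X_t)$, there is exactly one $y_1$ (namely $g(y_2,\dots,y_t)$) for which $\Prob(X_1=y_1,X_2=y_2,\dots,X_t=y_t)>0$, and for that value
\begin{equation*}
\Prob(X_1=y_1,X_2=y_2,\dots,X_t=y_t)=\Prob(X_2=y_2,\dots,X_t=y_t).
\end{equation*}
Plugging this into the definition of $H$ from Definition~\ref{def:entropy} and reindexing the sum over the support of $(X_2,\dots,X_t)$ collapses the joint entropy:
\begin{equation*}
H(\{X_i\}_1^t)=-\sum_{(y_2,\dots,y_t)} \Prob(X_2=y_2,\dots,X_t=y_t)\log_s\Prob(X_2=y_2,\dots,X_t=y_t)=H(\{X_i\}_2^t).
\end{equation*}

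I do not expect a serious obstacle here. The only delicate point is reading the hypothesis $N(v_1)\subseteq S$ correctly (as $N(v_1)\subseteq S\setminus\{v_1\}$, using that the neighborhood is open), and handling the marginalization cleanly so that each nonzero joint probability is paired with the corresponding marginal probability exactly once. This is really just the standard chain-rule fact that $H(X,Y)=H(Y)$ whenever $X$ is a function of $Y$, specialized to the setting where the determinism comes from $\mathcal{P}$ being a fixed-point guessing strategy.
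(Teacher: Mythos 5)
Your argument is correct. Note that the paper does not prove this lemma at all --- it is quoted from Riis's graph entropy paper \cite{riis_graph_2007} --- so there is no in-paper proof to compare against; your derivation (on the support of $X_\mathcal{P}$, the fixed-point condition forces $X_1=f_1$ applied to the coordinates in $N(v_1)\subseteq\{v_2,\dots,v_t\}$, and then $H(X,Y)=H(Y)$ whenever $X$ is a deterministic function of $Y$) is the standard and intended one, and your handling of the open neighborhood and of the collapse of the sum over the support is sound.
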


We can link the special case when this random variable is picking from an optimal strategy to the guessing number. 

\begin{lemma}\label{lem:gnent}\cite{riis_graph_2007}
Let $\mathcal{P}$ be an optimal strategy on a graph $G$. Then
\begin{equation*}
   H(X_\mathcal{P})=\gn (G).
\end{equation*}
\end{lemma}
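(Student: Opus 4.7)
My plan is to derive the identity directly from the definitions, with essentially no nontrivial combinatorics required. First, I would observe that by construction $X_\mathcal{P}$ is uniformly distributed on the set of fixed points of $\mathcal{P}$. A standard one-line calculation from Definition~\ref{def:entropy} shows that the entropy of a uniform distribution on a set of size $N$ equals $\log_b N$, where $b$ is the chosen base. Taking $b = s$ (the natural choice in the setting of the $s$-guessing game, matching the ``appropriately chosen base'' in the definition), this gives
\[
H(X_\mathcal{P}) = \log_s |\text{fix}(\mathcal{P})|.
\]

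Next, I would invoke the optimality hypothesis together with Definition~\ref{def:gn}. Since $\mathcal{P}$ is optimal, we have $|\text{fix}(\mathcal{P})| = \max_{\mathcal{P}'}\{\text{fix}(\mathcal{P}')\} = s^{\gn(G,s)}$. Substituting into the previous display yields
\[
H(X_\mathcal{P}) = \log_s s^{\gn(G,s)} = \gn(G,s),
\]
which is the stated conclusion (interpreting $\gn(G)$ in the lemma as $\gn(G,s)$ for the $s$ fixed when defining $X_\mathcal{P}$, consistent with the chosen entropy base).

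There is really no obstacle here; the entire content of the lemma is packed into the setup, namely the choice of the uniform distribution on the fixed points of an optimal protocol and the matching of the logarithm base to the number of colors. The only point worth being explicit about in the write-up is the base convention in Definition~\ref{def:entropy}: the equality as stated holds precisely because that base is taken to be $s$, so that $\log_s$ cancels against the exponentiation $s^{\gn(G,s)}$ in Definition~\ref{def:gn}.
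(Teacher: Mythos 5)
The paper does not prove this lemma; it is quoted from Riis's work on graph entropy, so there is no in-paper argument to diverge from. Your derivation is correct and is exactly the intended one: uniformity of $X_\mathcal{P}$ on $\text{fix}(\mathcal{P})$ gives $H(X_\mathcal{P})=\log_s|\text{fix}(\mathcal{P})|$, and optimality plus Definition~\ref{def:gn} gives $\log_s|\text{fix}(\mathcal{P})|=\gn(G,s)$. You are also right to flag the two conventions the statement silently relies on --- the entropy base being $s$ and $\gn(G)$ meaning $\gn(G,s)$ for the fixed $s$ under which $X_\mathcal{P}$ is defined --- since the lemma as printed conflates the general guessing number of Definition~\ref{def:ggn} with the $s$-specific one.
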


Let $\mathcal{P}$ be an (optimal) strategy on a graph $G$ for a given $s$. From the basic properties of entropy~\cite{riis_graph_2007}, we have 
\begin{lemma}\label{lem:entleq1}
 For any $X_i \in X_{\mathcal{F}}$, we have 
\[H(X_i)\leq 1.\]
\end{lemma}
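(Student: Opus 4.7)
The plan is straightforward once we fix the base of the logarithm. From Definition~\ref{def:gn} the guessing number is measured in base $s$, and Lemma~\ref{lem:gnent} identifies it with the entropy of $X_{\mathcal{P}}$, so throughout this subsection entropy is computed with $b=s$ in Definition~\ref{def:entropy}. The coordinate $X_i$ takes values in $\Z_s$, a set of cardinality $s$, so the claim $H(X_i)\leq 1=\log_s s$ is just the standard statement that the uniform distribution maximizes entropy.

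First I would write out the definition explicitly: letting $p_j=\Prob(X_i=j)$ for $j\in\Z_s$, we have
\[ H(X_i)=-\sum_{j=0}^{s-1} p_j\log_s p_j =\sum_{j \,:\, p_j>0} p_j\log_s\!\left(\frac{1}{p_j}\right), \]
with the usual convention $0\log_s 0=0$. Then I would apply Jensen's inequality to the concave function $\log_s$ with weights $p_j$:
\[ \sum_{j\,:\,p_j>0} p_j\log_s\!\left(\frac{1}{p_j}\right)\leq \log_s\!\left(\sum_{j\,:\,p_j>0} p_j\cdot\frac{1}{p_j}\right)=\log_s|\{j:p_j>0\}|\leq \log_s s =1. \]
This gives $H(X_i)\leq 1$.

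There is essentially no obstacle here; the only subtlety is the bookkeeping about which base of logarithm is being used, and this is dictated by the paper's convention that the guessing number and entropy are in base $s$. Alternatively, one could derive the same inequality from Lemma~\ref{lem:shannon} applied to the constant random variable paired with $X_i$ together with the uniform bound, but the direct Jensen argument above is the cleanest route.
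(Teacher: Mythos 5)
Your proof is correct and is exactly the ``basic property of entropy'' that the paper invokes without spelling out (it simply cites Riis for this fact); the key observation that the base of the logarithm is $s$, so that $\log_s s = 1$ bounds the entropy of any $\Z_s$-valued variable, is the right one, and the Jensen argument is the standard derivation. No discrepancy with the paper's (implicit) proof.
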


\section{Extremal Numbers}\label{sec:ext}

In this section, we present a construction of the extremal graph with guessing number strictly less than $a$, and we prove that this construction is unique up to isomorphism, as is common with extremal graphs. Our extremal construction has integer guessing number. Any graph with more edges than the extremal construction has guessing number at least 1 more than the extremal construction. Therefore, we will see that  
\[ \ex(n,\gn \geq a)=\ex(n,\gn\geq \lceil a \rceil),\]
as well as 
\[ \Ex(n,\gn\geq a)=\Ex(n,\gn\geq \lceil a \rceil).\]

\begin{prop}
\label{prop:dsbound}
For any graph $G$, if $\chi(G)\geq k$, then $|E(G)|\geq \binom{k}{2}$ edges. If $\chi(G)=k$ and $|E(G)|= \binom{k}{2}$, then $G \sim K_k + E_{n-k}$.
\end{prop}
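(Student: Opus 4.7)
The plan is to locate a $k$-critical subgraph inside $G$ and exploit the classical fact that such a subgraph has minimum degree at least $k-1$. Recall that a graph $H$ is $k$-critical if $\chi(H)=k$ but $\chi(H-e)<k$ for every $e\in E(H)$. Since $\chi(G)\geq k$, iteratively deleting edges whose removal does not drop the chromatic number below $k$ yields a $k$-critical subgraph $H\subseteq G$.

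First I would establish (or cite) Dirac's observation that every vertex of $H$ has $H$-degree at least $k-1$. The argument is a one-line extension-of-coloring: if some $v\in V(H)$ had $\deg_H(v)\leq k-2$, then by criticality $H-v$ has a proper $(k-1)$-coloring, and because $v$ has at most $k-2$ neighbors, one of the $k-1$ colors is free for $v$, extending the coloring and contradicting $\chi(H)=k$. Since $\chi(H)=k$ also forces $|V(H)|\geq k$, summing degrees gives
\[
|E(G)|\;\geq\;|E(H)|\;\geq\;\tfrac12\,|V(H)|(k-1)\;\geq\;\tfrac{k(k-1)}{2}\;=\;\binom{k}{2},
\]
which is the first assertion.

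For the uniqueness part, assume $\chi(G)=k$ and $|E(G)|=\binom{k}{2}$. Each inequality in the chain above must be tight. Tightness of the last forces $|V(H)|=k$, tightness of the middle forces $H$ to be $(k-1)$-regular on $k$ vertices, so $H\cong K_k$, and tightness of the first forces $E(G)=E(H)$, meaning no edges of $G$ lie outside $V(H)$ nor between $V(H)$ and its complement. Therefore the remaining $n-k$ vertices are isolated and $G\cong K_k+E_{n-k}$, as claimed.

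The only nontrivial input is the minimum-degree lemma for critical graphs, but that is a textbook fact with a two-line proof, so the main step is really just packaging the critical-subgraph reduction and then reading off all three equality conditions simultaneously in the final paragraph.
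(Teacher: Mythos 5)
The paper itself offers no written proof here; it just cites Diestel (Proposition 5.2.1 and Lemma 5.2.3), and your critical-subgraph argument is exactly the content of the second of those citations, so your route is the intended one and your tightness analysis for the equality case is the right way to extract the uniqueness statement. However, there is a concrete slip in how you build $H$. You define $k$-criticality purely via edge deletion and construct $H$ by deleting only edges, so $V(H)=V(G)$. Such an $H$ can have isolated vertices: take $G=K_k+E_{n-k}$ itself, where no edge can be removed without dropping $\chi$ below $k$, so $H=G$ and $\delta(H)=0$. For an isolated vertex $v$ your extension argument yields no contradiction, because edge-criticality does not provide a $(k-1)$-coloring of $H-v$ when $v$ has no incident edge, and the degree-sum bound $|E(H)|\geq\tfrac12|V(H)|(k-1)$ with $|V(H)|=n$ is then simply false (it would claim $|E(K_3+E_{100})|\geq 103$).

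The repair is one line and standard: take $H$ minimal with $\chi(H)=k$ under taking \emph{subgraphs}, so that vertices as well as edges may be deleted (equivalently, pass first to a vertex-critical subgraph). Then $\chi(H-v)\leq k-1$ holds for every $v$ by minimality, Dirac's bound $\delta(H)\geq k-1$ follows exactly as you argue, and the rest of your proof --- the chain of inequalities and the three simultaneous equality conditions forcing $H\cong K_k$ and $E(G)=E(H)$, hence $G\sim K_k+E_{n-k}$ --- goes through unchanged.
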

\begin{proof}
This follows from basic properties of the chromatic number. For example, see Theorem 5.2.1 and Lemma 5.2.3 in~\cite{reinhard_diestel_graph_2016}.
\end{proof}

By noting that $\chi(G)=\cp(\overline{G})$ and substituting $n-k$ for $k$ for later convenience, we state the following corollary of Proposition~\ref{prop:dsbound}.
\begin{corollary}\label{cor:cpbound}
For any graph $G$, if $\cp(G)\geq n-k$, then $|E(G)|\leq \binom{n}{2}- \binom{n-k}{2}$ edges. If $\cp(G)=n-k$ and $|E(G)|=\binom{n}{2}- \binom{k}{2}$, then $G \sim K_k \oplus E_{n-k}$.
\end{corollary}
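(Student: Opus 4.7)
The plan is to obtain the corollary as a direct translation of Proposition~\ref{prop:dsbound} by passing to the complement graph. Three ingredients are needed and all are already in hand: the identity $\chi(\overline{G})=\cp(G)$ recalled just before the statement, the edge count identity $|E(\overline{G})|=\binom{n}{2}-|E(G)|$, and the fact that complementation swaps the disjoint union $+$ and the join $\oplus$.

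First I would apply Proposition~\ref{prop:dsbound} to $\overline{G}$ with the parameter $n-k$ in place of $k$. The hypothesis $\cp(G)\geq n-k$ then reads $\chi(\overline{G})\geq n-k$, so the proposition yields $|E(\overline{G})|\geq \binom{n-k}{2}$. Substituting $|E(\overline{G})|=\binom{n}{2}-|E(G)|$ and rearranging gives the main inequality $|E(G)|\leq \binom{n}{2}-\binom{n-k}{2}$.

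For the equality case, under the hypotheses $\cp(G)=n-k$ and $|E(G)|=\binom{n}{2}-\binom{n-k}{2}$, we have $|E(\overline{G})|=\binom{n-k}{2}$, so the equality clause of Proposition~\ref{prop:dsbound} forces $\overline{G}\sim K_{n-k}+E_{k}$. Taking complements and using $\overline{K_{n-k}+E_{k}}=\overline{K_{n-k}}\oplus \overline{E_{k}}=E_{n-k}\oplus K_{k}$ gives $G\sim K_k \oplus E_{n-k}$, as claimed. I do not expect any genuine obstacle here, as the argument is essentially bookkeeping; the only thing worth flagging is a minor notational slip in the displayed equality-case hypothesis (written $|E(G)|=\binom{n}{2}-\binom{k}{2}$), which should read $|E(G)|=\binom{n}{2}-\binom{n-k}{2}$ so as to line up tightly with the inequality just established.
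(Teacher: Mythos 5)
Your proof is correct and follows exactly the route the paper intends: complementing $G$, applying Proposition~\ref{prop:dsbound} with parameter $n-k$, and using $|E(\overline{G})|=\binom{n}{2}-|E(G)|$ together with the fact that complementation exchanges $+$ and $\oplus$. You are also right that the equality-case hypothesis in the statement contains a typo and should read $|E(G)|=\binom{n}{2}-\binom{n-k}{2}$.
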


Theorem~\ref{thm:extremal} gives a complete characterization of the extremal graphs and numbers for any guessing number. Note that if $a>n-1$, then the extremal graphs and numbers are undefined, as it is not possible for a simple grpah $G$ to have a guessing number greater than $n-1$.

\begin{theorem}
\label{thm:extremal}
For $n>\lceil a \rceil$, let $k=\lceil a \rceil-1$. Then, we have 
\[\Ex(n,gn(G(n))\geq a)=\{ K_{k} \oplus E_{n-k} \},\]
and
\[\ex(n,gn(G(n))\geq a)=\binom{n}{2}-\binom{n-k}{2}.\]
\end{theorem}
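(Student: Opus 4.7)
The strategy is to produce the extremal graph $K_k \oplus E_{n-k}$ as a candidate, use Lemma~\ref{lem:cpalpha} to verify it has the right guessing number, and then use Corollary~\ref{cor:cpbound} (the clique-cover bound on edges) to prove both the upper bound on the edge count and the uniqueness of the extremal graph.

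First I would verify the candidate. For $G = K_k \oplus E_{n-k}$, I would observe that the $n-k$ vertices in the $E_{n-k}$ part form an independent set, and no larger independent set exists because every vertex of $K_k$ is adjacent to all others. Hence $\alpha(G) = n-k$. For the clique cover number, since $E_{n-k}$ contains no edges, each clique in a partition can contain at most one vertex of $E_{n-k}$, giving $\cp(G) \geq n-k$; equality is achieved by placing $n-k-1$ vertices of $E_{n-k}$ as singleton cliques and putting the last vertex of $E_{n-k}$ together with all of $K_k$ into one clique (this is a clique by the join construction). Thus $\alpha(G) = \cp(G) = n-k$, and Lemma~\ref{lem:cpalpha} gives $\gn(G,s) = k$ for every $s$, so $\gn(G) = k < a$. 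A direct count yields $|E(G)| = \binom{k}{2} + k(n-k) = \binom{n}{2} - \binom{n-k}{2}$.

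Next I would prove the upper bound together with uniqueness. Given any graph $G$ on $n$ vertices with $\gn(G) < a$, the lower bound in Lemma~\ref{lem:cpalpha} gives $n - \cp(G) \leq \gn(G,s) \leq \gn(G) < a$ for any $s$, so $\cp(G) > n - a$. Since $\cp(G)$ is an integer and $k = \lceil a \rceil - 1$, a short case split (on whether $a$ is an integer) shows $\cp(G) \geq n-k$. Corollary~\ref{cor:cpbound} then yields $|E(G)| \leq \binom{n}{2} - \binom{n-k}{2}$, establishing the extremal number. For uniqueness, I would argue that if $\cp(G) \geq n-k+1$, then applying Proposition~\ref{prop:dsbound} to $\overline{G}$ gives $|E(\overline{G})| \geq \binom{n-k+1}{2} > \binom{n-k}{2}$, hence $|E(G)| < \binom{n}{2} - \binom{n-k}{2}$ strictly. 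So equality in the edge bound forces $\cp(G) = n-k$, and the equality clause of Corollary~\ref{cor:cpbound} (applied through $\overline{G}\cong K_{n-k} + E_k$) yields $G \cong K_k \oplus E_{n-k}$.

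I do not expect a serious obstacle: the argument is essentially a clean application of the clique-cover bound on guessing number from Lemma~\ref{lem:cpalpha}, transferred to edge counts via the complementary chromatic-number bound of Proposition~\ref{prop:dsbound}. The only point requiring care is the bookkeeping with the ceiling function, namely verifying that $\gn(G) < a$ translates exactly to $\cp(G) \geq n - k$ with $k = \lceil a \rceil - 1$ regardless of whether $a$ is an integer. This also explains the remarks preceding the theorem that $\ex(n, \gn \geq a) = \ex(n, \gn \geq \lceil a \rceil)$, since the extremal graph always attains integer guessing number exactly equal to $k$.
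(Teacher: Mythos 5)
Your proposal is correct and follows essentially the same route as the paper's proof: verify that the candidate $K_k \oplus E_{n-k}$ attains guessing number $k$ via Lemma~\ref{lem:cpalpha}, then obtain the edge bound and uniqueness from Corollary~\ref{cor:cpbound}. Your write-up is in fact a bit more careful on the ceiling bookkeeping and the uniqueness step, and you correctly compute $\alpha(K_k\oplus E_{n-k})=n-k$ where the paper's proof contains a typo stating $\alpha=k$.
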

\begin{proof}
It is easy to see that $\alpha(K_{k} \oplus E_{n-k})=k$. Furthermore, we can find a partition of the vertices into $n-k$ cliques by placing each vertex of the independent set of size $n-k$ into one of the classes, and distributing the remaining vertices arbitrarily over the $n-k$ classes. We show an example of a graph $K_{k} \oplus E_{n-k}$ and a clique partition in Figure~\ref{fig:exexample}. 
Now, by Lemma~\ref{lem:cpalpha}, we see that 
\[ \gn(K_{k} \oplus E_{n-k})=k  .\]

Above we have an example of a graph on $n$ vertices, $\binom{n}{2}-\binom{n-k}{2}$ edges, and guessing number $k$. It follows from Corollary~\ref{cor:cpbound}, that any graph $G$ on $n$ vertices and at least $\binom{n}{2}-\binom{n-k}{2}+1$ edges must have $\gn (G)\geq k+1$. Also, by Corollary~\ref{cor:cpbound}, it follows that the graph $K_{k} \oplus E_{n-k}$ is the unique graph on $n$ vertices, $\binom{n}{2}-\binom{n-k}{2}$ edges, and guessing number $k$.

\end{proof}

Intuitively, adding edges to a graph (weakly) increases its guessing number. More information is never bad. However, this result is in a sense extremal for the number of ``useless" edges that the graph has in terms of an optimal strategy. If we look at Figure~\ref{fig:exexample}, we see that a strategy that forms guesses independently in each of the classes of the clique partition only uses $\binom{k+1}{2}$ edges, and ignores the remaining $k(n-k-1)$ edges. If $k$ is treated as a constant, this implies that only a constant number of edges out of a number that grows linearly with $n$ is ``useful". The number of edges in our graph is therefore quite uninformative if we do not know the structure. 
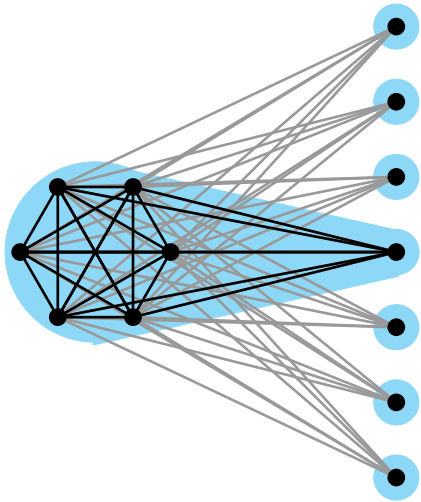
\begin{figure}[h]
\label{exexample}
\centering
  \begin{tikzpicture}[scale=1]
\draw[cyan!40,fill=cyan!40,line width=0] (0,0) circle (1.2);
\draw[cyan!40,fill=cyan!40,line width=2] (0,1.2) -- (0,-1.2) -- (4,-.3) -- (4,.3);
\draw[cyan!40,fill=cyan!40,line width=0] (4,0) circle (.3);
\foreach \i in {1,...,7}
 {
 \draw[cyan!40,fill=cyan!40,line width=0] (4,-4+\i) circle (.3);
  \foreach \j in {1,...,6}
  {
  \draw[black!40,line width=1pt] (4,-4+\i) -- (60*\j:1);
  }
  \draw[fill=black!100,line width=1] (4,-4+\i) circle (.1);
 }
 \foreach \i in {1,...,6}
{\begin{scope} [rotate=60*\i]
\draw[black!100,line width=1pt] (0:1) -- (60:1);
\draw[black!100,line width=1pt] (0:1) -- (120:1);
\draw[black!100,line width=1pt] (0:1) -- (180:1);
 \draw[fill=black!100,line width=1] (0:1) circle (.1);
\end{scope}}
\foreach \j in {1,...,6}
  {
  \draw[black!100,line width=1pt] (4,0) -- (60*\j:1);
  }
\end{tikzpicture}
\caption{Example of a graph $K_6 \oplus E_7$, with a clique partition (indicated by shaded areas) into 7 cliques.}
\label{fig:exexample}
\end{figure}

Lemma~\ref{lem:monF} shows the monotonic behavior of the extremal function in the case of forbidden subgraphs. Given the relationship between guessing numbers and forbidden subgraphs, only the analogue of Lemma~\ref{lem:monF}(iii) follows directly from the definitions. Together with Theorem~\ref{thm:extremal}, we obtain the following corollary.

\begin{corollary}\label{cor:monexgn}
When $a \leq b$, we have
\begin{itemize}
    \item[(i)] $\ex (n,\gn_s \geq a)\leq \ex (n,\gn_s \geq b) $,
     \item[(ii)] $\ex (n,\gn_s \geq a)\leq \ex (n+1,\gn_s \geq a) $.
\end{itemize}
\end{corollary}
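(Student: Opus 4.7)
The plan is to deduce both inequalities directly from the closed form $\ex(n,\gn_s\geq a) = \binom{n}{2} - \binom{n-\lceil a \rceil + 1}{2}$ (valid for $n > \lceil a \rceil$) given by Theorem~\ref{thm:extremal}; the phrase ``Together with Theorem~\ref{thm:extremal}'' in the corollary's statement signals that the closed form is the intended tool.

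For (i), I would observe that $a \leq b$ forces $\lceil a \rceil \leq \lceil b \rceil$, so $n - \lceil a \rceil + 1 \geq n - \lceil b \rceil + 1$. Since $\binom{\cdot}{2}$ is monotone on nonnegative integers, subtracting the larger binomial from $\binom{n}{2}$ yields the smaller result, which is exactly the claim. Equivalently, and without invoking Theorem~\ref{thm:extremal} at all, any $n$-vertex graph with $\gn_s(G) < a$ trivially satisfies $\gn_s(G) < b$, so the family over which the left-hand maximum is taken is a subset of the family defining the right-hand maximum.

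For (ii), setting $k = \lceil a \rceil - 1$, I would compute the finite difference
\[\ex(n+1,\gn_s \geq a) - \ex(n,\gn_s \geq a) = \left(\binom{n+1}{2} - \binom{n}{2}\right) - \left(\binom{n+1-k}{2} - \binom{n-k}{2}\right) = n - (n-k) = k \geq 0.\]
A more structural alternative, which also handles the boundary case $n = \lceil a \rceil$ uniformly, is to take any $G \in \Ex(n,\gn_s\geq a)$, adjoin an isolated vertex to form $G'$, and note that Lemma~\ref{lem:subgraphadd} combined with the observation that the isolated vertex's strategy must be constant (so its fixed points biject with those of the induced strategy on $G$) gives $\gn_s(G') = \gn_s(G) < a$. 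Then $G'$ is an $(n+1)$-vertex graph with $|E(G)|$ edges and guessing number below $a$, certifying the inequality.

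The hard part, if there is one, is purely bookkeeping: verifying the finite-difference calculation and checking the $n = \lceil a \rceil$ boundary. Neither is a genuine obstacle once Theorem~\ref{thm:extremal} is in hand, so I would expect the write-up to be only a few lines.
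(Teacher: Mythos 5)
Your proposal is correct and matches the paper's (implicit) derivation: the paper gives no written proof, merely noting that part (ii) follows directly from the definitions and that part (i) follows "together with Theorem~\ref{thm:extremal}," which is exactly the closed-form computation you carry out. Your extra observations --- the subset-of-families argument for (i) and the isolated-vertex argument for (ii) --- are both valid and arguably make the corollary independent of Theorem~\ref{thm:extremal}, but they do not constitute a genuinely different route.
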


\section{Saturation Numbers}\label{sec:sat}
We now move to the saturation number, which is the smallest number of edges a saturated graph can have. We begin with the saturation numbers for the properties of having guessing number at least $2$ and at least $n-1$. For these cases, can find exact numbers.

\begin{lemma}\label{lem:2sat}
For all $n$, $\sat(n,\gn \geq 2)=n-1$.
\end{lemma}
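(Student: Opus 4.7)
The plan is to prove $\sat(n,\gn \geq 2) = n-1$ by establishing matching upper and lower bounds, with the star $K_{1,n-1}$ playing the role of the unique extremal saturated graph.

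For the upper bound, I would exhibit $K_{1,n-1}$, which has $n-1$ edges, as $(\gn \geq 2)$-saturated. Since $\alpha(K_{1,n-1}) = n-1$, Lemma~\ref{lem:cpalpha} gives $\gn(K_{1,n-1}) \leq 1 < 2$. To verify saturation, any non-edge $e$ joins two leaves $\ell_i$ and $\ell_j$; for $n \geq 4$ the graph $K_{1,n-1}+e$ contains the vertex-disjoint edges $\ell_i\ell_j$ and $c\ell_k$, where $c$ is the center and $\ell_k$ is any other leaf, and hence contains $K_2 + K_2$ as a vertex-disjoint subgraph, so Lemmas~\ref{lem:subgraphadd} and~\ref{lem:kngn} force $\gn \geq 1+1 = 2$. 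For $n=3$ the only non-edge completes a $K_3$, which has guessing number $2$ by Lemma~\ref{lem:kngn}.

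The lower bound rests on the following characterization, which I would establish first: a graph $G$ on $n$ vertices satisfies $\gn(G) < 2$ if and only if $G$ is a ``sub-star'', by which I mean a star $K_{1,k}$ together with $n-1-k$ isolated vertices, allowing $k=0$ (the edgeless case). The ``if'' direction is immediate from $\alpha(G) \geq n-1$ and Lemma~\ref{lem:cpalpha}. For ``only if'', I would observe that any two vertex-disjoint edges in $G$ produce a $K_2+K_2$ subgraph, and any triangle produces a $K_3$ subgraph; either forces $\gn(G) \geq 2$ via Lemmas~\ref{lem:subgraphadd} and~\ref{lem:kngn}. Hence $\gn(G) < 2$ implies $G$ is triangle-free with matching number at most one, and a short case check on three pairwise-intersecting edges shows that any such graph has all its edges through a single common vertex.

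It then remains to rule out sub-stars with fewer than $n-1$ edges. If $G$ has no edges, then adding any edge produces a single edge together with isolated vertices, which is again a sub-star and hence still has $\gn < 2$, so $G$ is not saturated. If $G = K_{1,k}$ together with $n-1-k \geq 1$ isolated vertices, then adding an edge from an isolated vertex to the center of $K_{1,k}$ produces $K_{1,k+1}$ with fewer isolated vertices, again a sub-star with $\gn < 2$, so $G$ is not saturated either. The only remaining possibility is $K_{1,n-1}$ itself, which has exactly $n-1$ edges. The main conceptual step is the sub-star characterization of $\{G : \gn(G)<2\}$; once that is in hand, the edge-count follows by a routine case analysis.
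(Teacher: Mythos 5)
Your proof is correct and follows essentially the same route as the paper: characterize the graphs with $\gn<2$ as stars plus isolated vertices (via the forbidden $K_3$ and $K_2+K_2$, using Lemmas~\ref{lem:subgraphadd} and~\ref{lem:kngn}), then show any such graph with an isolated vertex fails to be saturated by joining that vertex to the star's center. You additionally verify explicitly that $K_{1,n-1}$ is itself saturated, which the paper leaves to the remark following the lemma (via Theorem~\ref{thm:extremal}), but this is a completeness point rather than a different approach.
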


\begin{proof}
Suppose there is a graph $G$ on $n$ vertices with fewer than $n-1$ edges and $\gn(G)<2$. As a consequence of Lemmas~\ref{lem:subgraphadd} and~\ref{lem:kngn}, it cannot have a triangle or a matching of size 2. This implies that $G \sim K_{1,t}+(n-t-1)K_1$, for $0\leq t \leq n-1$. 

Suppose, for the sake of contradiction, that $G$ has an isolated vertex. Then, we can add an edge from an isolated vertex to the vertex at the center of the star $K_{1,t}$. Adding such an edge keeps the guessing number strictly below 2, since there is still an independent set of size $n-1$. Therefore, $G$ is not saturated with respect to $\gn(G)\geq 2$. We conclude that $G \sim K_{1,n-1}$. 
\end{proof}

Note that, by Theorem \ref{thm:extremal}, $\ex(n, \gn \geq 2)=n-1$. Therefore, we have 
\[\sat (n, \gn \geq 2)=\ex(n, \gn \geq 2)=n-1\] 
and we have 
\[ \Sat (n, \gn \geq 2)=\Ex(n, \gn \geq 2)= \{ K_{1,n} \} .\]

We find a similar complete picture for the extremal and saturation numbers (as well as the respective graphs) of $\gn \geq n-1$.

\begin{lemma}
For all $n$, $\sat(n,\gn(G(n))\geq n-1)=\binom{n}{2}-1$.
\end{lemma}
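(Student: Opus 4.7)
The plan is to show that $\sat(n, \gn \geq n-1) = \binom{n}{2} - 1$ by establishing matching upper and lower bounds. Both directions will rest on the upper bound $\gn(G) \leq n - \alpha(G)$ from Lemma~\ref{lem:cpalpha} together with the evaluation $\gn(K_n) = n-1$ from Lemma~\ref{lem:kngn}. Together these give the morally correct statement that the property $\gn(G) \geq n - 1$ is so demanding that $K_n$ is essentially the only witness, so $\gn(G) < n-1$ is preserved under any edge addition that leaves at least one non-edge in place.

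For the upper bound, I would take $G = K_n - e$ for an arbitrary edge $e$. Since $G$ has $\alpha(G) = 2$, Lemma~\ref{lem:cpalpha} gives $\gn(G) \leq n - 2 < n - 1$, so $G$ satisfies the first saturation condition. The only non-edge of $G$ is $e$ itself, and $G + e = K_n$, which has $\gn(K_n) = n - 1$ by Lemma~\ref{lem:kngn}. Thus $G$ is $(\gn \geq n-1)$-saturated and realizes exactly $\binom{n}{2} - 1$ edges, giving $\sat(n, \gn \geq n-1) \leq \binom{n}{2} - 1$.

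For the lower bound, I would show that any $(\gn \geq n-1)$-saturated graph $G$ has exactly one non-edge. First, since $\gn(G) < n - 1$ and $\gn(K_n) = n - 1$, we must have $G \neq K_n$, so $G$ has at least one non-edge. Next, suppose for contradiction that $G$ has two distinct non-edges $e_1$ and $e_2$. Then $e_2$ is still a non-edge in $G + e_1$, so the endpoints of $e_2$ form an independent set of size $2$ in $G + e_1$. Applying Lemma~\ref{lem:cpalpha} again gives $\gn(G + e_1) \leq n - 2 < n - 1$, which contradicts the saturation condition that adding any non-edge must push the guessing number to at least $n - 1$. Hence $G$ has exactly one non-edge, and $|E(G)| = \binom{n}{2} - 1$.

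I do not expect a significant obstacle here; the only modest subtlety is that the saturation condition demands control of $\gn(G + e)$ for \emph{some} added edge $e$, and Lemma~\ref{lem:cpalpha} supplies exactly the right tool because the independence-number upper bound is preserved whenever a non-edge survives. As a byproduct of the argument, we also obtain $\Sat(n, \gn \geq n - 1) = \{K_n - e\}$, in parallel with the complete characterization given for $\gn \geq 2$ in Lemma~\ref{lem:2sat}.
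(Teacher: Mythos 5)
Your proof is correct and follows essentially the same route as the paper: the key fact in both is that $K_n$ is the only graph on $n$ vertices with guessing number $n-1$ (which you correctly derive from the bound $\gn(G)\leq n-\alpha(G)$ of Lemma~\ref{lem:cpalpha}), so a saturated graph must have exactly one non-edge. Your write-up simply makes explicit the details that the paper's very terse proof leaves implicit, and your identification $\Sat(n,\gn\geq n-1)=\{K_n-e\}$ agrees with the paper's $\{K_{n-2}\oplus E_2\}$.
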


\begin{proof}
The only graph with guessing number $n-1$ is the complete graph on $n$ vertices. Therefore, the only graph saturated with respect to $\gn(G(n))\geq n-1$ has one less edge then the complete graph, or $\binom{n}{2}-1$ edges.
\end{proof}

We have 
\[\sat (n, \gn \geq n-1)=\ex(n, \gn \geq n-1)=\binom{n}{2}-1\] 
and we have 
\[ \Sat (n, \gn \geq n-1)=\Ex(n, \gn \geq n-1)= \{ K_{n-2}\oplus E_2 \} .\]

These two results give us that for guessing numbers $2$ and $n-1$, the saturation number is the same as the extremal number. However, for other guessing numbers there is a gap between the extremal number and the guessing number. Particularly, we show that for any constant integer guessing number $a>2$, the saturation number is bounded by a constant as $n$ grows.  

We begin by looking at saturation for guessing number $3$. We will generalize this construction later on, but this example will serve as a warm-up.  Consider the graph $C_5$. Christofides and Markström bound the guessing number for all $s$ with the following theorem.
\begin{theorem}\cite{christofides_guessing_2011}
\label{th:cyclebound}
For $s$ and $k$ positive integers, $\gn_s(C_{2k+1})\leq \frac{2k+1}{2}$.
\end{theorem}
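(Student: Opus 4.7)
The plan is to use the graph entropy characterization of the guessing number. By Lemma~\ref{lem:gnent}, for an optimal strategy $\mathcal{P}$ on $C_{2k+1}$ with $s$ colors, $\gn_s(C_{2k+1})=H(X)$, where $X=(X_1,\ldots,X_{2k+1})$ is uniform on $\fix(\mathcal{P})$. Label vertices cyclically so that $N(v_i)=\{v_{i-1},v_{i+1}\}$ (indices mod $2k+1$). Since $X_i=f_i(X_{i-1},X_{i+1})$ on every fixed point, Lemma~\ref{lem:entpop} delivers the key structural identity $H(X_{i-1},X_i,X_{i+1})=H(X_{i-1},X_{i+1})$ for every $i$, i.e.\ the middle of any three consecutive vertices contributes no additional entropy.

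The central step would be a carefully chosen application of entropy submodularity (Shearer's inequality, a standard consequence of iterating Lemma~\ref{lem:shannon}). Applied to the cyclic family of overlapping three-vertex windows $\{v_{i-1},v_i,v_{i+1}\}$, in which each vertex appears in exactly three sets, and using the identity above, Shearer gives
\[
3H(X) \;\le\; \sum_{i=1}^{2k+1} H(X_{i-1},X_i,X_{i+1}) \;=\; \sum_{i=1}^{2k+1} H(X_{i-1},X_{i+1}).
\]
Combined with $H(X_{i-1},X_{i+1})\le H(X_{i-1})+H(X_{i+1})\le 2$ via subadditivity and Lemma~\ref{lem:entleq1}, this already yields the weaker bound $H(X)\le\tfrac{2(2k+1)}{3}$. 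The sharpening to $(2k+1)/2$ would come from additionally summing the submodularity inequalities
\[
H(X_{i-1},X_i)+H(X_i,X_{i+1})\;\ge\;H(X_{i-1},X_{i+1})+H(X_i)
\]
cyclically around the cycle and using Lemma~\ref{lem:entleq1} one more time, so as to extract a half-unit of entropy loss per odd-length closure.

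The main obstacle will be forcing the \emph{oddness} of $C_{2k+1}$ to actually enter. A purely linear combination of the submodular identities, ignoring parity, decouples the fixed-point system into two independent halves that saturate every local inequality, yielding only the trivial $H(X)\le 2k+1$; the improvement to $(2k+1)/2$ reflects the global obstruction that an odd cycle cannot be two-colored, so the constraints $X_i=f_i(X_{i-1},X_{i+1})$ cannot be propagated consistently around two independent parity classes. If the direct cyclic averaging does not close cleanly, I would first symmetrize $\mathcal{P}$ over the rotation group $\Z_{2k+1}$ (which preserves $H(X)$ by convexity of entropy in the strategy distribution) so that all $H(X_i)$ and all $H(X_i,X_{i+1})$ become constant in $i$, reducing the cyclic estimate to a single scalar inequality that should be tractable by combining the Shearer bound above with the cyclic submodular identity.
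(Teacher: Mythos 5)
There is a genuine gap. (Note first that the paper does not prove this theorem at all; it is imported from Christofides and Markstr\"om, so your argument has to stand on its own.) Your setup is fine: Lemma~\ref{lem:gnent}, the elimination identity $H(X_{i-1},X_i,X_{i+1})=H(X_{i-1},X_{i+1})$ from Lemma~\ref{lem:entpop}, and the Shearer step $3H(X)\le\sum_i H(X_{i-1},X_{i+1})\le 2(2k+1)$ are all correct, but as you acknowledge they only give $\tfrac{2}{3}(2k+1)$. The problem is the sharpening. The cyclic submodularity inequalities $H(X_{i-1},X_i)+H(X_i,X_{i+1})\ge H(X_{i-1},X_{i+1})+H(X_i)$ point the wrong way: summing them bounds $\sum_i H(X_{i-1},X_{i+1})$ from above by $2\sum_i H(X_i,X_{i+1})-\sum_i H(X_i)$, and feeding that into the Shearer step together with $H(X_i,X_{i+1})\le H(X_i)+H(X_{i+1})$ and Lemma~\ref{lem:entleq1} yields $3H(X)\le 3(2k+1)$ --- strictly worse than what you already had. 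No linear combination of the ingredients you list is exhibited that produces the coefficient $\tfrac12$; the ``half-unit of entropy loss per odd closure'' is asserted, not derived. The fallback of symmetrizing $\mathcal{P}$ over the rotation group does not rescue this: a mixture of the fixed-point distributions of rotated strategies is in general not the fixed-point distribution of any strategy, and in particular the functional dependence $X_i=f_i(X_{i-1},X_{i+1})$ --- hence the identity $H(X_{i-1},X_i,X_{i+1})=H(X_{i-1},X_{i+1})$ on which everything rests --- is destroyed by averaging, since a mixture of two deterministic dependencies need not be deterministic.

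What is actually needed is a cap on the edge marginals. Eliminating alternate vertices shows $H(X)=H(X_i,X_{i+1},X_{i+3},\dots,X_{i+2k-1})\le H(X_i,X_{i+1})+(k-1)$ for every edge, so summing over the $2k+1$ edges gives $(2k+1)H(X)\le\sum_i H(X_i,X_{i+1})+(2k+1)(k-1)$, and the theorem would follow from $\sum_i H(X_i,X_{i+1})\le\tfrac32(2k+1)$, i.e.\ from showing that the edge projections of the fixed-point set have geometric mean at most $s^{3/2}$ rather than the trivial $s^2$. That estimate (which is tight for the extremal strategies when $s$ is a perfect square) is the real content of the Christofides--Markstr\"om bound, and it is exactly the piece your sketch does not supply; none of subadditivity, submodularity, Shearer, or $H(X_i)\le 1$ delivers it on its own. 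I would either prove that cap directly or simply cite the result as the paper does.
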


\begin{lemma}
\label{lem:5cycle}
For $n\geq 5$ and all $s$, $\sat(n,\gn_s\geq 3)\leq 5$.
\end{lemma}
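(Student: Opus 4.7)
The plan is to take $G = C_5 + (n-5)K_1$, which has $n$ vertices and exactly $5$ edges, and to show that $G$ is $(\gn_s \geq 3)$-saturated for every $s$. This immediately yields $\sat(n,\gn_s \geq 3) \leq 5$.

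To establish $\gn_s(G) < 3$, I would observe that the guess function $f_v$ on any isolated vertex $v$ depends on no inputs and is therefore constant; hence in every fixed point $v$ takes a prescribed color, and the fixed points of any strategy on $G$ are in bijection with those of its restriction to $C_5$. Taking the maximum over all strategies yields $\gn_s(G) = \gn_s(C_5) \leq 5/2 < 3$ by Theorem~\ref{th:cyclebound}.

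For saturation, I would verify $\gn_s(G+e) \geq 3$ for every non-edge $e$ by exhibiting two vertex-disjoint subgraphs of $G+e$ whose guessing numbers sum to at least $3$ and invoking Lemma~\ref{lem:subgraphadd}. If both endpoints of $e$ lie in the $C_5$, then by symmetry of $C_5$ I may assume $e = v_1v_3$, so $G+e$ contains the vertex-disjoint pair $K_3$ on $\{v_1,v_2,v_3\}$ and $K_2$ on $\{v_4,v_5\}$; Lemma~\ref{lem:kngn} then gives $\gn_s(G+e) \geq 2+1 = 3$. If $e$ joins an isolated vertex $u$ to a $C_5$ vertex, say $v_1$, then $G+e$ contains vertex-disjoint $K_2$ on $\{u,v_1\}$ and $P_4$ on $v_2v_3v_4v_5$; since $\alpha(P_4) = \cp(P_4) = 2$, Lemma~\ref{lem:cpalpha} forces $\gn_s(P_4) = 2$, so $\gn_s(G+e) \geq 1 + 2 = 3$. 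Finally, if both endpoints of $e$ are isolated, then $G+e$ contains $C_5$ and $K_2$ as vertex-disjoint subgraphs, and since $\cp(C_5) = 3$, Lemma~\ref{lem:cpalpha} gives $\gn_s(C_5) \geq 5-3 = 2$, whence $\gn_s(G+e) \geq 2+1 = 3$.

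The only subtle point is the strict inequality $\gn_s(G) < 3$: Lemma~\ref{lem:cpalpha} alone only yields $\gn_s(G) \leq n - \alpha(G) = 3$, so the strict separation requires the isolated-vertex reduction to $C_5$ together with the cycle bound in Theorem~\ref{th:cyclebound}. The three-case analysis in the saturation step is then essentially mechanical, provided the right vertex-disjoint decompositions are chosen.
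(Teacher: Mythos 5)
Your proposal is correct and follows essentially the same route as the paper: the same graph $C_5 + E_{n-5}$, the bound $\gn_s(C_5)\leq 5/2$ from Theorem~\ref{th:cyclebound}, and a case analysis of the three ways (up to isomorphism) of adding an edge, lower-bounding $\gn_s(G+e)$ by $3$ in each case. The only cosmetic difference is that the paper exhibits a clique cover of cardinality $n-3$ in each case and invokes Lemma~\ref{lem:cpalpha}, whereas you sum the guessing numbers of vertex-disjoint pieces via Lemmas~\ref{lem:subgraphadd} and~\ref{lem:kngn} --- equivalent arguments, since the clique-cover bound is derived from exactly those lemmas --- and you make explicit the isolated-vertex reduction $\gn_s(G)=\gn_s(C_5)$, which the paper leaves implicit.
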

\begin{proof}
For $n\geq 5$, we let $G \sim C_5 + E_{n-5}$. As shown in Figure~\ref{fig:c5+edge}, the addition of any edge $e$ to this graph results in $G+e$ having a clique cover of size $n-3$. Note that in the figure, the three cases are shown without any additional isolated vertices drawn. The second and third case only apply when $n\geq 6$ and $n \geq 7$, respectively.

\end{proof}

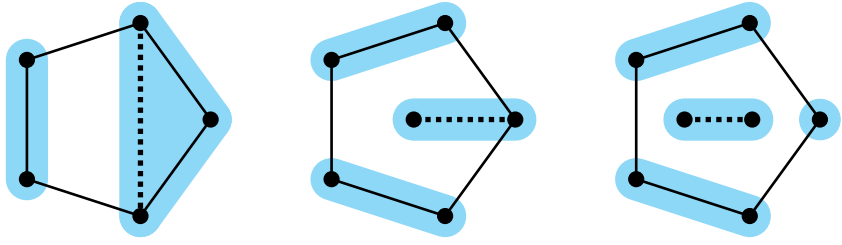
\begin{figure}[h]
\centering
  \begin{tikzpicture}[scale=.9]
\draw[cyan!40,fill=cyan!40,line width=16, line cap=round] (216:1.5) -- (144:1.5);
  \draw[cyan!40,fill=cyan!40,line width=16, line cap=round] (0:1.5) -- (72:1.5);
   \draw[cyan!40,fill=cyan!40,line width=16, line cap=round] (0:1.5) -- (-72:1.5);
    \draw[cyan!40,fill=cyan!40,line width=16, line cap=round] (-72:1.5) -- (72:1.5);
    \draw[cyan!40,fill=cyan!40] (0:1.5) -- (72:1.5) -- (-72:1.5);
\draw[cyan!40,fill=cyan!40,line width=0] (1.5,0) circle (.3);
\draw[black!100,line width=2pt,dotted] (72:1.5) -- (-72:1.5);
\foreach \i in {1,...,5} 
{
\draw[fill=black!100,line width=1] (\i*72:1.5) circle (.1);
\draw[black!100,line width=1pt] (\i*72:1.5) -- (72+\i*72:1.5);
}

 \begin{scope}[shift={(9,0)}]

   \draw[cyan!40,fill=cyan!40,line width=16, line cap=round] (72:1.5) -- (144:1.5);
\draw[cyan!40,fill=cyan!40,line width=16, line cap=round] (288:1.5) -- (216:1.5);
\draw[cyan!40,fill=cyan!40,line width=16, line cap=round] (-.5,0) -- (.5,0);
\draw[cyan!40,fill=cyan!40,line width=0] (1.5,0) circle (.3);
\draw[fill=black!100,line width=1] (-.5,0) circle (.1);
\draw[fill=black!100,line width=1] (.5,0) circle (.1);
\draw[black!100,line width=2pt,dotted] (-.5,0) -- (.5,0);
\foreach \i in {1,...,5} 
{
\draw[fill=black!100,line width=1] (\i*72:1.5) circle (.1);
\draw[black!100,line width=1pt] (\i*72:1.5) -- (72+\i*72:1.5);
}

 \end{scope}
 
\begin{scope}[shift={(4.5,0)}]
\draw[cyan!40,fill=cyan!40,line width=16, line cap=round] (72:1.5) -- (144:1.5);
\draw[cyan!40,fill=cyan!40,line width=16, line cap=round] (288:1.5) -- (216:1.5);
\draw[cyan!40,fill=cyan!40,line width=16, line cap=round] (0,0) -- (1.5,0);
\draw[fill=black!100,line width=1] (0,0) circle (.1);
\draw[black!100,line width=2pt,dotted] (1.5,0) -- (0,0);
\foreach \i in {1,...,5} 
{
\draw[fill=black!100,line width=1] (\i*72:1.5) circle (.1);
\draw[black!100,line width=1pt] (\i*72:1.5) -- (72+\i*72:1.5);
}
  
 \end{scope}
 
\end{tikzpicture}
\caption{Illustration of the three ways in which an edge can be added to a graph $G\sim C_5 + E_{n-5}$, up to isomorphism, given large enough $n$. The blue shading indicates a clique cover of cardinality n-3 in each case.}
\label{fig:c5+edge}
\end{figure}

We now present a general graph that preserves the ``nice'' properties of the 5-cycle in the form of a slight modification of the complete bipartite graph. 

For any integer $a\ge 2$ let $K^*_{a,a}$ be the complete bipartite graph $K_{a,a}$ with one subdivided edge. We shall label the vertices $x_1,\ldots, x_{a}$ and $y_1,\ldots y_{a}$ where all vertices $x_i$ are in one class of the partition and all vertices $y_i$ are in the other.  We will subdivide the edge $x_1y_1$, that is, it will be replaced by edges $x_1v_0$ and $y_1v_0$ where $v_0$ is an additional vertex, of degree 2. Note that $K^*_{2,2}\sim C_5$.

\begin{lemma}\label{lem:gnKaaupp}
For $a \ge 3$ and $s$ positive integers, we have $\gn_s(K^*_{a,a})\leq a+\frac{2}{3}$.
\end{lemma}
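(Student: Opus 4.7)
I would work entirely in the entropy framework. Fix an optimal strategy $\mathcal{P}$ for $K^*_{a,a}$ on $s$ colours and let $X_V$ denote the random vector obtained by picking a coloring uniformly from the fixed points of $\mathcal{P}$. By Lemma~\ref{lem:gnent}, it suffices to prove $H(X_V) \le a + \tfrac{2}{3}$. The first step is a closure reduction via Lemma~\ref{lem:entpop}: since $N(x_i) = \{y_1,\ldots,y_a\}$ for $i\ge 2$ and $N(x_1) = \{y_2,\ldots,y_a,v_0\}$, each $X_{x_i}$ becomes redundant once $\{v_0,y_1,\ldots,y_a\}$ is already in the conditioning, so $H(X_V) = H(X_{\{v_0,y_1,\ldots,y_a\}})$, and by the symmetric role of the two sides also $H(X_V) = H(X_{\{v_0,x_1,\ldots,x_a\}})$.

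Combining these identities with subadditivity and the bound $H(X_v)\le 1$ from Lemma~\ref{lem:entleq1}, I would derive three parallel inequalities:
\begin{align*}
H(X_V) &\le H(X_{v_0,y_1}) + (a-1),\\
H(X_V) &\le H(X_{v_0,x_1}) + (a-1),\\
H(X_V) &\le H(X_{x_1,y_1}) + (a-1).
\end{align*}
The first two just split off one extra vertex from each closed side. The third uses that $X_{v_0} = f_{v_0}(X_{x_1},X_{y_1})$, so after conditioning on $(X_{x_1},X_{y_1})$ the value $X_{v_0}$ contributes nothing, and then conditioning on $(X_{y_2},\ldots,X_{y_a})$ (of entropy at most $a-1$) determines every remaining $X_{x_i}$.

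Averaging the three bounds gives
\[
H(X_V) \;\le\; \tfrac{1}{3}\bigl(H(X_{v_0,y_1}) + H(X_{v_0,x_1}) + H(X_{x_1,y_1})\bigr) + (a-1),
\]
so the target $H(X_V) \le a + \tfrac{2}{3}$ reduces to the sub-claim
\[
H(X_{v_0,y_1}) + H(X_{v_0,x_1}) + H(X_{x_1,y_1}) \;\le\; 5.
\]
The main obstacle is this sub-claim: each summand is bounded only by $2$, so the naive total is $6$ and one full unit must be saved. I would extract the saving from the interplay of the three functional relations $X_{v_0}=f_{v_0}(X_{x_1},X_{y_1})$, $X_{x_1}=f_{x_1}(X_{y_2},\ldots,X_{y_a},X_{v_0})$, and $X_{y_1}=f_{y_1}(X_{x_2},\ldots,X_{x_a},X_{v_0})$, which together impose a cyclic constraint on the triple $(X_{v_0},X_{x_1},X_{y_1})$. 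I expect the hypothesis $a\ge 3$ to enter precisely here, providing enough external variables $X_{y_2},\ldots,X_{y_a}$ and $X_{x_2},\ldots,X_{x_a}$ for the constraint to force a genuine reduction in the joint entropy. Consistently, for $a = 2$ one has $K^*_{2,2}\sim C_5$ and the sharper bound $5/2$ from Theorem~\ref{th:cyclebound} already beats $a+\tfrac{2}{3}=\tfrac{8}{3}$.
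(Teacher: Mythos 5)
Your reduction is sound and runs parallel to the paper's own entropy argument: the closure identities via Lemma~\ref{lem:entpop}, the three decompositions, and the averaging all check out, and the problem correctly collapses to the sub-claim
\[
H(X_{v_0,y_1}) + H(X_{v_0,x_1}) + H(X_{x_1,y_1}) \;\le\; 5.
\]
But that sub-claim is exactly where the entire content of the lemma lives --- it is the one unit of entropy that separates the trivial bound $a+1$ from the claimed $a+\tfrac{2}{3}$ --- and you have not proved it; you have only announced that you would ``extract the saving from the interplay of the three functional relations.'' That is a genuine gap, not a routine detail. Worse, your stated plan points in the wrong direction: the saving has nothing to do with a cyclic constraint among the three relations, and nothing to do with the hypothesis $a\ge 3$ (the argument goes through verbatim for $a=2$; the paper restricts to $a\ge 3$ only because $K^*_{2,2}\sim C_5$ is handled separately by Theorem~\ref{th:cyclebound}). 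The external variables $X_{y_2},\dots,X_{y_a}$ play no role in the sub-claim at all.

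The correct mechanism --- and the one the paper uses, in a slightly different packaging --- needs only two facts about the triple: $X_{v_0}$ is a function of $(X_{x_1},X_{y_1})$ since $N(v_0)=\{x_1,y_1\}$, and $H(X_{v_0})\le 1$. Submodularity applied to the two pairs containing $v_0$ gives
\[
H(X_{v_0,x_1}) + H(X_{v_0,y_1}) \;\le\; H(X_{v_0}) + H(X_{v_0,x_1,y_1}) \;=\; H(X_{v_0}) + H(X_{x_1,y_1}) \;\le\; 1 + 2,
\]
and adding $H(X_{x_1,y_1})\le 2$ yields the required total of $5$. (The paper's chain of inequalities~(\ref{en:st:popv0})--(\ref{en:st:vxbnd}) performs the same maneuver: it pops out a singleton $H(V_0)\le 1$ and absorbs $V_0$ into $H(Y_1,X_1)\le 2$ via Lemma~\ref{lem:entpop}.) Note also that both your argument and the paper's require subadditivity and submodularity of entropy, which go beyond the monotonicity actually stated in Lemma~\ref{lem:shannon}; if you complete your proof along these lines you should cite those standard inequalities explicitly.
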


\begin{proof}
Let $\mathcal{P}$ be an optimal guessing protocol for the guessing game on $G$ with $s$ colors. Let $Z$ be a random variable that picks uniformly from all fixed points of $\mathcal{P}$. For brevity, we will use the notation $X_{[k:l]}=X_k,\dots,X_l$. Then, the random variable $Z=(V_0, X_{[1:a]},Y_{[1:a]})$ where $X_i$ refers to the color assigned to vertex $x_i$, with a similar definition for $V_0$ and $Y_i$. 

Since $x_{[1:a]}$ and $y_{[1:a]}$ are both independent sets, by Lemma~\ref{lem:entpop} and~\ref{lem:gnent}, we have

\begin{equation*}
    \gn (K^*_{a,a})=H(Z)=H(V_0, X_1, Y_{[1:a]})=H(V_0, X_{[1:a]}, Y_1)= H(V_0, X_{[1:a]}).
\end{equation*}
Therefore,
\begin{align}
    3\cdot & \gn(K^*_{a,a}) = 3\cdot H(Z)\notag \\
                 &=  H(V_0, X_{[1:a]}, Y_1) + H(V_0, X_1, Y_{[1:a]}) + H(V_0, X_{[1:a]})\notag \
                 \\
                 &\leq H(V_0, X_{[1:a]}, Y_1) + H(V_0, X_1, Y_{[1:a]}) + H(V_0) + H(X_{[1:a]})
                 \label{en:st:popv0}
                 \\\
                 &\leq H(V_0, X_{[1:a]}, Y_1) + H(V_0, X_1, Y_{[2:a]}) + H(V_0, Y_1) + H(X_{[1:a]})\label{en:st:shgy1}\\\
                 &= H(V_0, X_{[1:a]}, Y_1) + H(V_0, Y_1) + H(V_0, X_1, Y_{[2:a]}) + H(X_{[1:a]})\notag\\
                 &\leq H(V_0, X_{[2:a]}, Y_1) + H(V_0, Y_1, X_1) + H(V_0, X_1, Y_{[2:a]}) + H(X_{[1:a]})\label{en:st:shgx1}\\\
                 &= H(V_0, X_{[2:a]}) + H(Y_1, X_1) + H(V_0, Y_{[2:a]}) + H(X_{[1:a]})\label{en:st:popin}\\\
                 &\leq a+2+a +a \label{en:st:vxbnd}\\\
                 &= 3a+2\notag
\end{align}
With inequalities~(\ref{en:st:popv0}), (\ref{en:st:shgy1}), and (\ref{en:st:shgx1}) relying on Lemma \ref{lem:shannon}, inequality (\ref{en:st:popin}) relying on Lemma~\ref{lem:entpop} and inequality~(\ref{en:st:vxbnd}) relying on Lemma~\ref{lem:entleq1}.
\end{proof}

\begin{lemma}\label{lem:Kaaadde}
Let $G\sim K^*_{a,a}+E_{n-2a-1}$ be a graph on $n\geq 2a-1$ vertices. Then for any $e\in \overline{E(G)}$, we have $\gn(G+e)= a+1$.
\end{lemma}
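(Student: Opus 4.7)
The plan is to prove $\gn_s(G+e) = a+1$ for every $s$ (and hence $\gn(G+e) = a+1$) by squeezing with the sandwich from Lemma~\ref{lem:cpalpha}: since $n - \cp(G+e) \le \gn_s(G+e) \le n - \alpha(G+e)$ holds for every $s$, it suffices to establish the $s$-independent identity $\alpha(G+e) = \cp(G+e) = n - a - 1$.

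First I would compute the baseline invariants of $G$. Each bipartition class $\{x_1,\dots,x_a\}$ and $\{y_1,\dots,y_a\}$ of $K^*_{a,a}$ is independent of size $a$, and one checks (using that $v_0$ is adjacent only to $x_1$ and $y_1$) that $\alpha(K^*_{a,a}) = a$, giving $\alpha(G) = a + (n - 2a - 1) = n - a - 1$. Since $K^*_{a,a}$ is triangle-free, $\cp(K^*_{a,a})$ equals $(2a+1)$ minus the size of a maximum matching; the matching $\{x_1 v_0,\, x_2 y_2,\, \dots,\, x_a y_a\}$ attains the maximum value $a$ (forced because $|V(K^*_{a,a})|$ is odd), so $\cp(G) = (a+1) + (n - 2a - 1) = n - a$. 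Because adding one edge changes each of $\alpha$ and $\cp$ by at most $1$, the inequality $\alpha(G+e) \le n - a - 1$ and the inequality $\cp(G+e) \ge n - a - 1$ are automatic, and I only need to establish the matching bounds in the opposite directions.

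I then split into three cases indexed by $t \in \{0,1,2\}$, the number of endpoints of $e$ that are isolated in $G$. For the bound $\alpha(G+e) \ge n - a - 1$, I exhibit in each case an independent set consisting of a bipartition class of $K^*_{a,a}$ chosen to avoid an endpoint of $e$, together with all isolated vertices that remain pairwise independent after adding $e$. For the bound $\cp(G+e) \le n - a - 1$, isolated vertices disjoint from $e$ contribute singleton cliques, so it suffices to cover $K^*_{a,a}$ together with the isolated endpoints of $e$ (and the edge $e$) using $a + t$ cliques. If $t = 2$, use the edge-clique $\{u,v\}$ together with the $(a+1)$-clique cover of $K^*_{a,a}$ from above. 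If $t = 1$ with $e = uw$, use the edge-clique $\{u,w\}$ plus a perfect matching of $K^*_{a,a} - w$. If $t = 0$, observe that $e$ completes a triangle inside $K^*_{a,a} + e$ (for example $\{x_1,v_0,y_1\}$ when $e = x_1 y_1$, $\{x_i, x_j, y_k\}$ for a suitable $k$ when $e = x_i x_j$, and symmetric triangles for the other non-edge types of $K^*_{a,a}$) and combine that triangle as a single clique with a perfect matching of the remaining $2a - 2$ vertices.

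The main obstacle is verifying the existence of these perfect matchings in the $t = 0$ and $t = 1$ sub-cases. In each instance the graph to be matched is $K^*_{a,a}$ minus a small set of at most three vertices, yielding a nearly balanced and almost-complete bipartite-like graph where a matching is straightforward to write down; the only care needed is when $v_0$ is among the removed vertices or when one of the asymmetric vertices $x_1$ or $y_1$ is removed, which forces a non-canonical pairing such as $\{x_1, y_i\}$ with $i \ne 1$ to bridge the missing $x_1 y_1$ edge. I would record these as a short list running over $w \in \{v_0, x_1, x_i\,(i \ge 2), y_1, y_j\,(j \ge 2)\}$ for $t = 1$ and over the five non-edge types of $K^*_{a,a}$ for $t = 0$, rather than attempt one uniform formula.
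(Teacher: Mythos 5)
Your strategy is the same one the paper uses: sandwich the guessing number between $n-\cp(G+e)$ and $n-\alpha(G+e)$ via Lemma~\ref{lem:cpalpha}, and check a case analysis of the ways an edge can be added (the paper enumerates eight isomorphism types in a figure; you organize them by the number $t$ of isolated endpoints of $e$). Your baseline computations $\alpha(G)=n-a-1$ and $\cp(G)=n-a$ are correct, and the clique covers you describe do give $\cp(G+e)\le n-a-1$ in every case --- the perfect matchings you need all exist, with the care you flag around $v_0$, $x_1$, $y_1$ --- so you correctly obtain $\gn_s(G+e)\ge a+1$ for every $s$.

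There is, however, a genuine gap in the case $t=2$, where $e$ joins two isolated vertices $u,v$. There $G+e\sim K^*_{a,a}+K_2+E_{n-2a-3}$, and an independent set can contain at most one of $u,v$, so the set you describe has only $a+(n-2a-2)=n-a-2$ vertices; in fact $\alpha(G+e)=n-a-2$, so the claimed bound $\alpha(G+e)\ge n-a-1$ is false and the sandwich yields only $a+1\le \gn(G+e)\le a+2$ in this case. The equality asserted by the lemma genuinely fails here: the guessing number is additive over disjoint unions (the fixed points of a strategy factor over components, refining Lemma~\ref{lem:subgraphadd}), so $\gn(G+e)=\gn(K^*_{a,a})+1$, and for $a=2$ this is $\gn(C_5)+1=\tfrac{5}{2}+1=\tfrac{7}{2}\ne 3$. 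You should know that the paper's own proof makes the same unsupported assertion that $\alpha(G+e)=n-a-1$ ``in each of these cases,'' so this is a defect of the lemma as stated rather than of your method alone; the correct conclusion in the $t=2$ case is only the inequality $\gn_s(G+e)\ge a+1$, which your clique-cover argument does establish and which is all that the application in Theorem~\ref{sat:bound} requires. To repair your write-up, restrict the independence-number computation to the cases $t\le 1$ and weaken the conclusion (or the lemma statement) to $\gn_s(G+e)\ge a+1$ in general.
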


\begin{proof}
It is not difficult to verify that, up to isomorphism, there are at most eight ways to add an edge to $G$. In each of these cases, $G+e$ has a clique cover of cardinality $n-a-1$. We provide proof by illustration in Figure~\ref{fig:satex}. It is also not difficult to see that $\alpha (G+e)=n-a-1$ in each of these cases, giving us the equality.
\end{proof}

\begin{figure}[h]
\centering
  \begin{tikzpicture}[scale=1]
  \draw[cyan!40,fill=cyan!40,line width=2] (0,2) -- (0,3) -- (2,3);
  \draw[cyan!40,fill=cyan!40,line width=16, line cap=round] (0,3) -- (2,3);
   \draw[cyan!40,fill=cyan!40,line width=16, line cap=round] (0,3) -- (0,2);
    \draw[cyan!40,fill=cyan!40,line width=16, line cap=round] (2,3) -- (0,2);
    \draw[cyan!40,fill=cyan!40,line width=16, line cap=round] (2,2) -- (0,1);
\draw[cyan!40,fill=cyan!40,line width=16, line cap=round] (0,0) -- (2,1);
\draw[cyan!40,fill=cyan!40,line width=16, line cap=round] (1,-.5) -- (2,0);
 \draw[fill=black!100,line width=1] (1,-.5) circle (.1);
\foreach \i in {1,...,4} 
{
 \draw[cyan!40,fill=cyan!40,line width=0] (0,-1+\i) circle (.15);
  \draw[cyan!40,fill=cyan!40,line width=0] (2,-1+\i) circle (.15);
\draw[fill=black!100,line width=1] (0,-1+\i) circle (.1);
\draw[fill=black!100,line width=1] (2,-1+\i) circle (.1);
}
\foreach \i in {2,...,4}
{\foreach \j in {1,...,4}
{\draw[black!100,line width=1pt] (0,-1+\i) -- (2,-1+\j);}
}
\foreach \j in {2,...,4}
{\draw[black!100,line width=1pt] (0,0) -- (2,-1+\j);}
 \draw[black!100,line width=2pt,dotted] (0,2) -- (0,3);
\draw[black!100,line width=1pt] (0,0) -- (1,-.5);
\draw[black!100,line width=1pt] (2,0) -- (1,-.5);

\begin{scope}[shift={(4,0)}]
\draw[cyan!40,fill=cyan!40,line width=2] (0,0) -- (2,0) -- (1,-.5);
  \draw[cyan!40,fill=cyan!40,line width=16, line cap=round] (0,0) -- (1,-.5);
   \draw[cyan!40,fill=cyan!40,line width=16, line cap=round] (0,0) -- (2,0);
    \draw[cyan!40,fill=cyan!40,line width=16, line cap=round] (2,0) -- (1,-.5);
\draw[cyan!40,fill=cyan!40,line width=16, line cap=round] (0,1) -- (2,1);
\draw[cyan!40,fill=cyan!40,line width=16, line cap=round] (0,2) -- (2,2);
\draw[cyan!40,fill=cyan!40,line width=16, line cap=round] (0,3) -- (2,3);
 \draw[fill=black!100,line width=1] (1,-.5) circle (.1);
\foreach \i in {1,...,4} 
{
 \draw[cyan!40,fill=cyan!40,line width=0] (0,-1+\i) circle (.15);
  \draw[cyan!40,fill=cyan!40,line width=0] (2,-1+\i) circle (.15);
\draw[fill=black!100,line width=1] (0,-1+\i) circle (.1);
\draw[fill=black!100,line width=1] (2,-1+\i) circle (.1);
}
\foreach \i in {2,...,4}
{\foreach \j in {1,...,4}
{\draw[black!100,line width=1pt] (0,-1+\i) -- (2,-1+\j);}
}
\foreach \j in {2,...,4}
{\draw[black!100,line width=1pt] (0,0) -- (2,-1+\j);}
 \draw[black!100,line width=2pt,dotted] (0,0) -- (2,0);
\draw[black!100,line width=1pt] (0,0) -- (1,-.5);
\draw[black!100,line width=1pt] (2,0) -- (1,-.5);
\end{scope}

\begin{scope}[shift={(12,0)}]
 \draw[cyan!40,fill=cyan!40,line width=2] (0,0) -- (0,1) -- (2,1);
  \draw[cyan!40,fill=cyan!40,line width=16, line cap=round] (0,3) -- (2,3);
   \draw[cyan!40,fill=cyan!40,line width=16, line cap=round] (0,1) -- (0,0);
    \draw[cyan!40,fill=cyan!40,line width=16, line cap=round] (2,3) -- (0,3);
\draw[cyan!40,fill=cyan!40,line width=16, line cap=round] (0,1) -- (2,1);
\draw[cyan!40,fill=cyan!40,line width=16, line cap=round] (0,0) -- (2,1);
    \draw[cyan!40,fill=cyan!40,line width=16, line cap=round] (2,2) -- (0,2);
\draw[cyan!40,fill=cyan!40,line width=16, line cap=round] (1,-.5) -- (2,0);
 \draw[fill=black!100,line width=1] (1,-.5) circle (.1);
\foreach \i in {1,...,4} 
{
 \draw[cyan!40,fill=cyan!40,line width=0] (0,-1+\i) circle (.15);
  \draw[cyan!40,fill=cyan!40,line width=0] (2,-1+\i) circle (.15);
\draw[fill=black!100,line width=1] (0,-1+\i) circle (.1);
\draw[fill=black!100,line width=1] (2,-1+\i) circle (.1);
}
\foreach \i in {2,...,4}
{\foreach \j in {1,...,4}
{\draw[black!100,line width=1pt] (0,-1+\i) -- (2,-1+\j);}
}
\foreach \j in {2,...,4}
{\draw[black!100,line width=1pt] (0,0) -- (2,-1+\j);}
 \draw[black!100,line width=2pt,dotted] (0,0) -- (0,1);
\draw[black!100,line width=1pt] (0,0) -- (1,-.5);
\draw[black!100,line width=1pt] (2,0) -- (1,-.5);
\end{scope}

 \begin{scope}[shift={(0,-5)}]
  \draw[cyan!40,fill=cyan!40,line width=16, line cap=round] (0,0) -- (1,-.5);
   \draw[cyan!40,fill=cyan!40,line width=16, line cap=round] (0,1) -- (2,0);
\draw[cyan!40,fill=cyan!40,line width=16, line cap=round] (2,1) -- (2.5,1);
\draw[cyan!40,fill=cyan!40,line width=16, line cap=round] (0,1) -- (2,0);
\draw[cyan!40,fill=cyan!40,line width=16, line cap=round] (0,2) -- (2,2);
\draw[cyan!40,fill=cyan!40,line width=16, line cap=round] (0,3) -- (2,3);
 \draw[fill=black!100,line width=1] (1,-.5) circle (.1);
\foreach \i in {1,...,4} 
{
 \draw[cyan!40,fill=cyan!40,line width=0] (0,-1+\i) circle (.15);
  \draw[cyan!40,fill=cyan!40,line width=0] (2,-1+\i) circle (.15);
\draw[fill=black!100,line width=1] (0,-1+\i) circle (.1);
\draw[fill=black!100,line width=1] (2,-1+\i) circle (.1);
}
\foreach \i in {2,...,4}
{\foreach \j in {1,...,4}
{\draw[black!100,line width=1pt] (0,-1+\i) -- (2,-1+\j);}
}
\foreach \j in {2,...,4}
{\draw[black!100,line width=1pt] (0,0) -- (2,-1+\j);}
 \draw[black!100,line width=2pt,dotted] (2,1) -- (2.5,1);
\draw[black!100,line width=1pt] (0,0) -- (1,-.5);
\draw[black!100,line width=1pt] (2,0) -- (1,-.5);
\draw[fill=black!100,line width=1] (2.5,1) circle (.1);
\end{scope}  

\begin{scope}[shift={(4,-5)}]
  \draw[cyan!40,fill=cyan!40,line width=16, line cap=round] (0,0) -- (1,-.5);
   \draw[cyan!40,fill=cyan!40,line width=16, line cap=round] (0,1) -- (2,1);
\draw[cyan!40,fill=cyan!40,line width=16, line cap=round] (2,0) -- (2.5,0);
\draw[cyan!40,fill=cyan!40,line width=16, line cap=round] (0,2) -- (2,2);
\draw[cyan!40,fill=cyan!40,line width=16, line cap=round] (0,3) -- (2,3);
 \draw[fill=black!100,line width=1] (1,-.5) circle (.1);
\foreach \i in {1,...,4} 
{
 \draw[cyan!40,fill=cyan!40,line width=0] (0,-1+\i) circle (.15);
  \draw[cyan!40,fill=cyan!40,line width=0] (2,-1+\i) circle (.15);
\draw[fill=black!100,line width=1] (0,-1+\i) circle (.1);
\draw[fill=black!100,line width=1] (2,-1+\i) circle (.1);
}
\foreach \i in {2,...,4}
{\foreach \j in {1,...,4}
{\draw[black!100,line width=1pt] (0,-1+\i) -- (2,-1+\j);}
}
\foreach \j in {2,...,4}
{\draw[black!100,line width=1pt] (0,0) -- (2,-1+\j);}
 \draw[black!100,line width=2pt,dotted] (2,0) -- (2.5,0);
\draw[black!100,line width=1pt] (0,0) -- (1,-.5);
\draw[black!100,line width=1pt] (2,0) -- (1,-.5);
\draw[fill=black!100,line width=1] (2.5,0) circle (.1);
\end{scope}

 \begin{scope}[shift={(8,0)}]
  \draw[cyan!40,fill=cyan!40,line width=16, line cap=round] (0,0) -- (1,-.5);
   \draw[cyan!40,fill=cyan!40,line width=16, line cap=round] (0,1) -- (2,0);
\draw[cyan!40,fill=cyan!40,line width=16, line cap=round] (0,1) -- (2,0);
\draw[cyan!40,fill=cyan!40,line width=16, line cap=round] (0,2) -- (2,2);
\draw[cyan!40,fill=cyan!40,line width=16, line cap=round] (0,3) -- (2,3);
\draw[cyan!40,fill=cyan!40,line width=16, line cap=round] (0,0) -- (2,1);
\draw[cyan!40,fill=cyan!40,line width=16, line cap=round] (1,-.5) -- (2,1);
\draw[cyan!40,fill=cyan!40,line width=2] (0,0) -- (2,1) -- (1,-.5);
 \draw[fill=black!100,line width=1] (1,-.5) circle (.1);
\foreach \i in {1,...,4} 
{
 \draw[cyan!40,fill=cyan!40,line width=0] (0,-1+\i) circle (.15);
  \draw[cyan!40,fill=cyan!40,line width=0] (2,-1+\i) circle (.15);
  \draw[cyan!40,fill=cyan!90] (1.6,-.1) -- (1.9,.4) -- (1,.8) -- (.4,.5);
\draw[fill=black!100,line width=1] (0,-1+\i) circle (.1);
\draw[fill=black!100,line width=1] (2,-1+\i) circle (.1);
}
\foreach \i in {2,...,4}
{\foreach \j in {1,...,4}
{\draw[black!100,line width=1pt] (0,-1+\i) -- (2,-1+\j);}
}
\foreach \j in {2,...,4}
{\draw[black!100,line width=1pt] (0,0) -- (2,-1+\j);}
 \draw[black!100,line width=2pt,dotted] (2,1) -- (1,-.5);
\draw[black!100,line width=1pt] (0,0) -- (1,-.5);
\draw[black!100,line width=1pt] (2,0) -- (1,-.5);
\end{scope}

 \begin{scope}[shift={(12,-5)}]
  \draw[cyan!40,fill=cyan!40,line width=16, line cap=round] (1,-.5) -- (1,-1);
\draw[cyan!40,fill=cyan!40,line width=16, line cap=round] (0,1) -- (2,0);
\draw[cyan!40,fill=cyan!40,line width=16, line cap=round] (2,1) -- (0,0);
\draw[cyan!40,fill=cyan!40,line width=16, line cap=round] (0,2) -- (2,2);
\draw[cyan!40,fill=cyan!40,line width=16, line cap=round] (0,3) -- (2,3);
\draw[cyan!40,fill=cyan!40,line width=2] (0,0) -- (1,-.5);
 \draw[fill=black!100,line width=1] (1,-.5) circle (.1);
\foreach \i in {1,...,4} 
{
 \draw[cyan!40,fill=cyan!40,line width=0] (0,-1+\i) circle (.15);
  \draw[cyan!40,fill=cyan!40,line width=0] (2,-1+\i) circle (.15);
\draw[cyan!40,fill=cyan!90] (1,.2) -- (1.6,.5) -- (1,.8) -- (.4,.5);
\draw[fill=black!100,line width=1] (0,-1+\i) circle (.1);
\draw[fill=black!100,line width=1] (2,-1+\i) circle (.1);
}
\draw[fill=black!100,line width=1] (1,-1) circle (.1);
 \draw[black!100,line width=2pt,dotted] (1,-.5) -- (1,-1);
\foreach \i in {2,...,4}
{\foreach \j in {1,...,4}
{\draw[black!100,line width=1pt] (0,-1+\i) -- (2,-1+\j);}
}
\foreach \j in {2,...,4}
{\draw[black!100,line width=1pt] (0,0) -- (2,-1+\j);}
\draw[black!100,line width=1pt] (0,0) -- (1,-.5);
\draw[black!100,line width=1pt] (2,0) -- (1,-.5);
\end{scope}  

\begin{scope}[shift={(8,-5)}]
  \draw[cyan!40,fill=cyan!40,line width=16, line cap=round] (0,0) -- (1,-.5);
\draw[cyan!40,fill=cyan!40,line width=16, line cap=round] (0,1) -- (2,1);
\draw[cyan!40,fill=cyan!40,line width=16, line cap=round] (2.7,1) -- (2.7,2);
\draw[cyan!40,fill=cyan!40,line width=16, line cap=round] (0,2) -- (2,2);
\draw[cyan!40,fill=cyan!40,line width=16, line cap=round] (0,3) -- (2,3);
\draw[cyan!40,fill=cyan!40,line width=0] (2,0) circle (.3);
\draw[cyan!40,fill=cyan!40,line width=2] (0,0) -- (1,-.5);
 \draw[fill=black!100,line width=1] (1,-.5) circle (.1);
\foreach \i in {1,...,4} 
{
 \draw[cyan!40,fill=cyan!40,line width=0] (0,-1+\i) circle (.15);
  \draw[cyan!40,fill=cyan!40,line width=0] (2,-1+\i) circle (.15);
\draw[fill=black!100,line width=1] (0,-1+\i) circle (.1);
\draw[fill=black!100,line width=1] (2,-1+\i) circle (.1);
}
\draw[fill=black!100,line width=1] (2.7,1) circle (.1);
\draw[fill=black!100,line width=1] (2.7,2) circle (.1);
 \draw[black!100,line width=2pt,dotted] (2.7,1) -- (2.7,2);
\foreach \i in {2,...,4}
{\foreach \j in {1,...,4}
{\draw[black!100,line width=1pt] (0,-1+\i) -- (2,-1+\j);}
}
\foreach \j in {2,...,4}
{\draw[black!100,line width=1pt] (0,0) -- (2,-1+\j);}
\draw[black!100,line width=1pt] (0,0) -- (1,-.5);
\draw[black!100,line width=1pt] (2,0) -- (1,-.5);
\end{scope}  
 
\end{tikzpicture}
\caption{Illustration of the eight ways in which an edge can be added to a graph $G\sim K^*_{4,4} + E_{n-9}$, up to isomorphism, given large enough $n$. The blue shading indicates a clique cover of cardinality $n-5$ in each case.}
\label{fig:satex}
\end{figure}
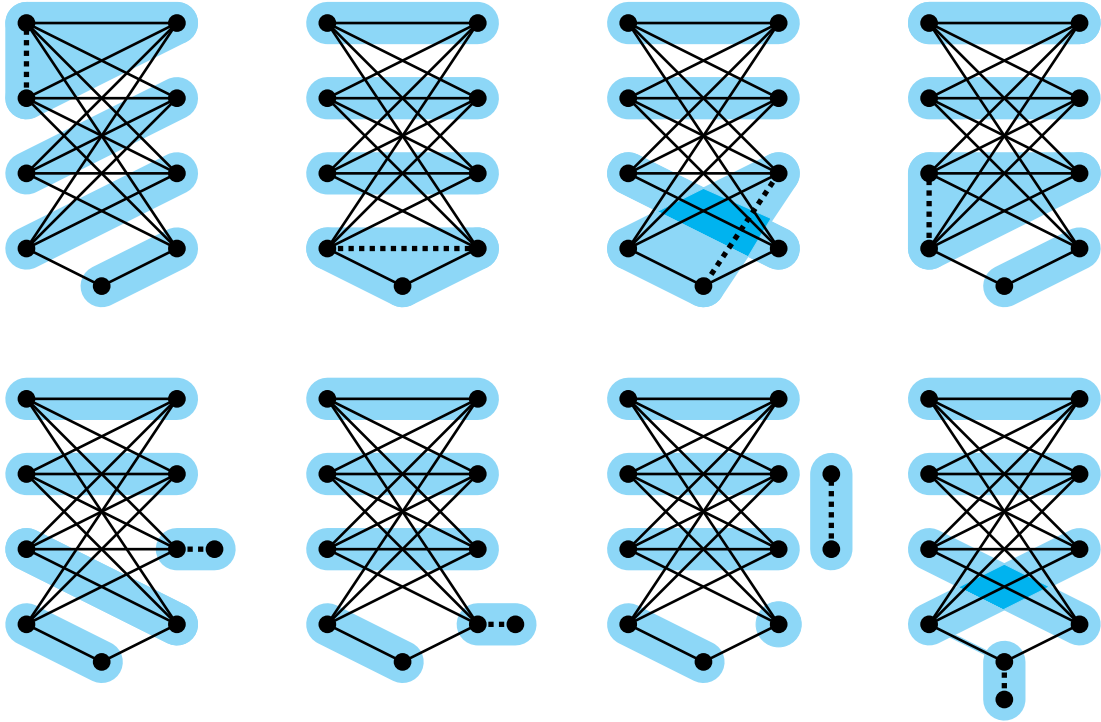

\begin{theorem}
\label{sat:bound}
Let $a\leq 2$ and $s$ be positive integers.
For $n\geq 2a+1$, we have $\sat(n,\gn_s\geq a+1)\leq a^2+1$.
\end{theorem}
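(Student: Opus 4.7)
The plan is to exhibit a concrete saturated graph, namely $G := K^*_{a,a} + E_{n-2a-1}$, and show that it has guessing number strictly less than $a+1$ while any edge addition pushes the guessing number up to $a+1$. A direct count gives $|E(G)| = (a^2 - 1) + 2 = a^2 + 1$, matching the claimed bound, and $G$ has exactly $n$ vertices by construction (which is why the hypothesis $n \ge 2a+1$ is needed). The case $a = 2$ is already covered by Lemma~\ref{lem:5cycle}, since $K^*_{2,2} \cong C_5$ and $a^2 + 1 = 5$; so the remaining work is for $a \ge 3$.

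First, I would verify $\gn_s(G) < a+1$. Lemma~\ref{lem:gnKaaupp} gives $\gn_s(K^*_{a,a}) \le a + \tfrac{2}{3}$, and the isolated vertices in the $E_{n-2a-1}$ summand contribute nothing: each has an empty neighborhood, so its guessing function is a constant and matches exactly one of the $s$ colors. Thus for any protocol $\mathcal{P}$ on $G$, the isolated vertices multiply the count of fixed points by $1$, yielding $\gn_s(G) = \gn_s(K^*_{a,a}) \le a + \tfrac{2}{3} < a+1$.

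Second, I would verify $\gn_s(G+e) \ge a+1$ for every $e \in \overline{E(G)}$. This is precisely the content of Lemma~\ref{lem:Kaaadde}, whose proof exhibits, in each of the eight isomorphism classes of edge additions, a clique cover of cardinality $n - a - 1$. The desired lower bound then follows from $n - \cp(G+e) \le \gn_s(G+e)$ in Lemma~\ref{lem:cpalpha}, which is valid for every $s$. Combining the two parts, $G$ is $(\gn_s \ge a+1)$-saturated, so $\sat(n, \gn_s \ge a+1) \le |E(G)| = a^2 + 1$.

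The only subtle point — there is no real obstacle — is keeping track of the strict inequality in the first step: the fractional bound $a + \tfrac{2}{3}$ from Lemma~\ref{lem:gnKaaupp} is what enables the conclusion $\gn_s(G) < a+1$ rather than merely $\le a+1$. An independence-number argument alone would be insufficient, because $\alpha(G) = n - a - 1$ and Lemma~\ref{lem:cpalpha} then gives only $\gn_s(G) \le a + 1$; the entropy-based refinement in Lemma~\ref{lem:gnKaaupp} is therefore doing the essential work.
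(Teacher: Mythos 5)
Your proof is correct and takes essentially the same route as the paper: the same construction $K^*_{a,a}+E_{n-2a-1}$, with Lemma~\ref{lem:gnKaaupp} giving the strict upper bound and Lemma~\ref{lem:Kaaadde} handling edge additions. You are in fact slightly more careful than the paper's own two-line proof, in that you verify the edge count, justify that the isolated vertices do not change the guessing number, and note that the $a=2$ case (where Lemma~\ref{lem:gnKaaupp} does not apply) is covered by Lemma~\ref{lem:5cycle}.
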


\begin{proof}
Consider the graph $G \sim K^*_{a,a}+E_{n-2a-1}$. By Lemma~\ref{lem:gnKaaupp}, we have that $\gn_s (G)\leq \gn(G)  \leq a+\tfrac{2}{3}<a+1.$ By Lemma~\ref{lem:Kaaadde}, we see that for any $e \in \overline{E(G)}$, we have $\gn_s(G+e)=a+1$.
\end{proof}

It should be noted that unlike the extremal number this constrution does not necessarily work for \textit{all} guessing numbers.  For a guessing number more than $\frac{1}{3}$ below an integer it is not clear if our construction has the correct guessing number to be saturated.

As we did in Corollary~\ref{cor:monexgn}, we can consider the monotonicity of the saturation function. In the case of forbidden subgraphs, it is known that the saturation function fails to have any of the monotonicity properties listed in Lemma~\ref{lem:monF}(i)-(iii). For the case of guessing numbers, saturation fails to have any of the monotonicity properties listed in Corollary~\ref{cor:monexgn}. As a counterexample to $\sat (n,\gn_s \geq a)\leq \sat (n,\gn_s \geq b)$ (the analogue of Corollary~\ref{cor:monexgn}(i)), we have seen that, when $n\geq 7$, we have that $\sat (n,\gn \geq 2) > \sat (n,\gn \geq 3)$. As a counterexample to $\sat (n,\gn_s \geq a)\leq \sat (n+1,\gn_s \geq a)$ (the analogue of Corollary~\ref{cor:monexgn}(ii)), we have that $\sat (6,\gn_s \geq 4)=10$ and $\sat (7,\gn_s \geq 4)=9$. (Verified by computer.)

\section{Iterative Construction of Saturated Graphs}\label{sec:spect}
 
In 1986, K\'{a}szonyi and Tuza provided a general saturated graph construction which proves that $\sat(n,\mathcal{F})=O(n)$ for every family of graphs $\mathcal{F}$~\cite{kaszonyi_saturated_1986}. Their construction is based on the following observation. For $\mathcal{F}$ a family of graphs, let 
\[ \mathcal{F}'=\{ F-v\; | \; F\in \mathcal{F},\; v\in V(F) \}.  \]
 
 \begin{lemma}\label{lem:vxadd}
For any graph $G$ and vertex $v \in G$, and for any number of colors $s$, we have 
\[ \gn (G-v,s) \leq \gn (G,s) \leq \gn (G-v,s)+1.   \]
Furthermore, if $N[w] \subseteq N(v)$ for any $w \in V(g) \setminus \{v\}$, then 
\[ \gn (G,s) = \gn (G-v,s)+1. \]
\end{lemma}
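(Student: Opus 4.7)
The plan is to dispatch the two-sided bound quickly and put the bulk of the work into the equality claim via an explicit strategy.

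For the lower bound $\gn(G-v, s) \leq \gn(G, s)$, I would take $G-v$ and the one-vertex graph on $v$ as vertex-disjoint subgraphs of $G$ and apply Lemma~\ref{lem:subgraphadd}; the single-vertex graph has guessing number $0$ by Lemma~\ref{lem:kngn}. For the upper bound $\gn(G, s) \leq \gn(G-v, s) + 1$, I would take an optimal strategy $\mathcal{P}$ on $G$ with $s^{\gn(G,s)}$ fixed points, use pigeonhole to pick a color $c_0$ that is the $v$-coordinate of at least $s^{\gn(G,s)-1}$ of these fixed points, and define a strategy $\mathcal{P}'$ on $G-v$ in which each function $f'_u$ is obtained from $f_u$ by hard-wiring the $v$-input to $c_0$. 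This is a valid strategy on $G-v$ since the remaining inputs of $f_u$ lie in $N_{G-v}(u) \subseteq N_G(u)$. Every fixed point of $\mathcal{P}$ with $c_v = c_0$ restricts to a distinct fixed point of $\mathcal{P}'$, so $|\text{fix}(\mathcal{P}')| \geq s^{\gn(G,s)-1}$ and $\gn(G-v, s) \geq \gn(G,s) - 1$.

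For the equality, I read the hypothesis as saying that $w$ is adjacent to $v$ in $G$ and $N_{G-v}(w) \subseteq N_G(v)$, i.e.\ $v$ sees $w$ and every other neighbor of $w$. Identify colors with $\Z_s$, fix an optimal $\mathcal{P}'$ on $G-v$, and define an injection $\Phi\colon \text{fix}(\mathcal{P}') \times \Z_s \to [s]^{V(G)}$ by $\Phi(c',t)_v = t$, $\Phi(c',t)_w = c'_w + t \pmod s$, and $\Phi(c',t)_u = c'_u$ for every other $u$. I would construct a strategy $\mathcal{P}$ on $G$ whose fixed-point set contains the image of $\Phi$ as follows: set $f_v(c) := c_w - f'_w(c|_{N_{G-v}(w)})$ and $f_w(c) := f'_w(c|_{N_{G-v}(w)}) + c_v$; for each $u \in N_{G-v}(w) \setminus \{w\}$ (which has $v \in N_G(u)$ by the hypothesis), set $f_u(c) := f'_u(c^*)$, where $c^*$ agrees with $c$ on $N_{G-v}(u) \setminus \{w\}$ and $c^*_w := c_w - c_v$; for every remaining $u \in V(G-v) \setminus \{w\}$, set $f_u(c) := f'_u(c|_{N_{G-v}(u)})$. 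A direct vertex-by-vertex check, using $f'_w(c'|_{N_{G-v}(w)}) = c'_w$ for $c' \in \text{fix}(\mathcal{P}')$, verifies $\Phi(c',t) \in \text{fix}(\mathcal{P})$; injectivity of $\Phi$ is immediate (recover $t$ from $c_v$ and $c'_w$ from $c_w - c_v$), so $|\text{fix}(\mathcal{P})| \geq s \cdot s^{\gn(G-v,s)}$, and combining with the upper bound yields equality.

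The main obstacle is the equality: a naive extension of $\mathcal{P}'$ to $G$ cannot create any new fixed points, because any choice of $f_v$ pins $c_v$ down as a function of the other colors. The construction above sidesteps this by coupling $c_v$ with a shift in $c_w$, so that $v$ can decode its own color by comparing the actual $c_w$ with the value $\mathcal{P}'$ predicts for $w$. The dominance hypothesis is what makes this work twice over: once so that $v$ has enough visibility to simulate $f'_w$, and again so that every vertex affected by shifting $c_w$, namely every $u \in N_{G-v}(w)$, also sees $v$ and can cancel the shift via the modified $f_u$.
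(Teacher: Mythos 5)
Your proposal is correct and follows essentially the same route as the paper: the lower bound via Lemma~\ref{lem:subgraphadd}, the upper bound by pigeonholing on the color of $v$ and hard-wiring it into a strategy for $G-v$ (the paper phrases this as a contradiction, you argue it directly), and the equality by the same ``treat $v$ and $w$ as one vertex with color $c(v)+c(w)$'' coupling, which you formalize with an explicit injection and the sign convention reversed. No gaps.
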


\begin{proof}
The lower bound follows from Lemma~\ref{lem:subgraphadd}. To prove the upper bound, suppose that we have a graph $G$ and vertex $v \in V(G)$ such that $\gn (G,s) > \gn (G-v,s)+1$, for the sake of contradiction. Then there exists a strategy $\mathcal{P}$ such that 
\[\fix (\mathcal{P}) >s^{\gn (G-v,s)+1}  . \]
Since there are only $s$ colors, this suggests that there is a color $j$ such that vertex $v$ has color $j$ in more than $s^{\gn (G-v,s)}$ of the fixed points of $\mathcal{P}$. 
 However, if we take the set of colorings given by these fixed points and restrict them to the set $V(G) \setminus \{v\}$, we obtain a strategy for $G-v$ with more than $s^{\gn (G-v,s)}$ the fixed points. This is in contradiction with the definition of $\gn (G-v,s)$.

Now suppose that there exists a $w \in V(g) \setminus \{v\}$ such that $N[w] \subseteq N(v)$. We will prove that $\gn (G,s) \geq \gn (G-v,s)+1$ by extending an optimal strategy on $G-v$ to a strategy on $v$. Informally, one can think of the new strategy as following the old strategy, except that we pretend that $v$ and $w$ are a single vertex with color $c(v)+c(w) \pmod{s}$.
Formally, label the vertices of $G$ as 
\[V(G)=\{v_1,v_2,\dots,v_{n-2},v_{n-1}=w,v_n=v \}.  \]
Let $\mathcal{P'}=\{  f_i' \}_1^{n-1}$ be an optimal strategy on $G-v$. We construct $\mathcal{P}=\{  f_i \}_1^{n}$, a strategy on $G$, as follows. For a given coloring $c$ of $V(G)$, let $c'(v_i)=c(v_i)$ for $1 \leq i \leq n-2$, and let $c'(w)=c(w)+c(v) \pmod{s}$. Then, we let $f_i(c)=f_i'(c')$ for $1 \leq i \leq n-2$. Furthermore, we let $f_{n-1}(c)=f_i'(c')-c(v)$ and $f_n(c)=f_i'(c')-c(w)$. This strategy gives 
\[ |\fix (\mathcal{P}) |=s \cdot  |\fix (\mathcal{P'}) | , \]
because for every $c' \in \fix (\mathcal{P'}) $ in which $c' (w)=j$, there are exactly $j$ colorings in $\fix (\mathcal{P'})$. We find these colorings by letting $c(v_i)=c'(v_i)$ for $1 \leq i 
\leq n-2$ and letting $c(w) \in \{ 0,\dots,s-1 \}$ with $c(v)=j-c(w) \pmod{s}$.
\end{proof}
 
 \begin{corollary}
\label{cor:vxaddsat}
Let $G$ be a graph with a dominating vertex $v$. Then 
\[ G-v\in \Sat (n, \gn_s \geq a)\;\; \Leftrightarrow \;\; G\in \Sat (n, \gn_s \geq a+1). \]
 \end{corollary}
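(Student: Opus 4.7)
The plan is to use the dominating-vertex equality clause of Lemma~\ref{lem:vxadd} as the main engine; the rest is essentially bookkeeping. Note that the statement compares graphs on different vertex counts: since $G$ has $n$ vertices, $G-v$ has $n-1$, so I read the claimed equivalence as
\[G-v \in \Sat(n-1, \gn_s \geq a) \iff G \in \Sat(n, \gn_s \geq a+1).\]

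First, I would observe that because $v$ is dominating, every $w \in V(G) \setminus \{v\}$ sits inside $N[v] = V(G)$ together with all of its neighbors, so the closed-neighborhood hypothesis of Lemma~\ref{lem:vxadd} is satisfied. The lemma then gives
\[\gn_s(G) = \gn_s(G-v) + 1,\]
so $\gn_s(G) < a+1$ if and only if $\gn_s(G-v) < a$. This disposes of the ``does not yet reach the threshold'' half of the saturation definition.

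Second, I would note that since $v$ is adjacent to every other vertex of $G$, every non-edge of $G$ lies in $\binom{V(G) \setminus \{v\}}{2}$, so $\overline{E(G)} = \overline{E(G-v)}$ and single-edge augmentations of $G$ correspond bijectively with those of $G-v$. For any such non-edge $e$, the vertex $v$ remains dominating in $G+e$ (edge addition never removes adjacencies), so Lemma~\ref{lem:vxadd} applies once more to yield
\[\gn_s(G+e) = \gn_s((G-v)+e) + 1.\]
Consequently $\gn_s(G+e) \geq a+1$ if and only if $\gn_s((G-v)+e) \geq a$, uniformly across the matched non-edges. Combining this with the equivalence from the previous paragraph gives precisely the biconditional in the corollary.

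There is no real obstacle beyond correctly invoking Lemma~\ref{lem:vxadd}; the only point worth double-checking is that adding an edge among non-$v$ vertices genuinely preserves $v$'s domination, so that the lemma can be reapplied to $G+e$ with the hypotheses unchanged. Once that is in hand, both defining clauses of saturation translate cleanly through the $\gn_s(G)=\gn_s(G-v)+1$ shift, and the equivalence follows.
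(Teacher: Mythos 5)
Your proposal is correct and takes essentially the same route as the paper: both arguments combine the equality $\gn_s(G)=\gn_s(G-v)+1$ from Lemma~\ref{lem:vxadd} (applied, as you note, to $G+e$ as well, since adding an edge among non-$v$ vertices preserves domination) with the observation that $\overline{E(G)}=\overline{E(G-v)}$, and then translate both clauses of the saturation definition through the shift by one. Your two clarifications --- reading the left-hand side as $\Sat(n-1,\gn_s\geq a)$ to account for the differing vertex counts, and reading the lemma's hypothesis as $N[w]\subseteq N[v]$ rather than the literally unsatisfiable $N[w]\subseteq N(v)$ printed in the paper --- are both the intended interpretations.
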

 \begin{proof}
 Suppose that $G-v\in \Sat (n, \gn_s \geq a)$. Then $\gn_s(G-v)<a$ and for any $e \in \overline{E(G-v)}$ we have $\gn_s(G-v+e)\geq a$. Since $v$ is a dominating vertex, we have $\overline{E(G-v)}=\overline{E(G)}$. Therefore, by Lemma~\ref{lem:vxadd}, the graph $G$ has the property that $\gn_s(G)<a+1$ and for any $e \in \overline{E(G)}$ we have $\gn_s(G+e)\geq a+1$. Therefore, $G\in \Sat (n, \gn_s \geq a+1)$. The other direction of the biconditional statement follows in a very similar manner.
 \end{proof}

 \begin{corollary}\label{cor:spectrum}
 When $a$ is a positive integer and $n\geq 2a+1$, there exist graphs on $n$ vertices that are $(\gn\geq a+1)$-saturated on any number of edges in the set
\[\left. \left\{ \binom{b}{2}\cdot b \cdot (n-b)  + (a-b)^2+1 \;\; \right|\;\; b \in \{ 0,\dots,a\} \right\}. \] 
 \end{corollary}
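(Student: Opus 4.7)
The plan is to construct, for each $b \in \{0,1,\dots,a\}$, an explicit $(\gn_s \geq a+1)$-saturated graph on $n$ vertices with the prescribed edge count, by iteratively applying Corollary~\ref{cor:vxaddsat} to the base construction underlying Theorem~\ref{sat:bound}.

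First, fix $b$ and take the base graph $H_b := K^*_{a-b,\,a-b} + E_{(n-b) - 2(a-b) - 1}$ on $n-b$ vertices; the hypothesis $n \geq 2a+1$ guarantees that the count of isolated vertices is nonnegative. The proof of Theorem~\ref{sat:bound}, applied with $a-b$ in place of $a$, shows that $H_b$ is $(\gn_s \geq a-b+1)$-saturated: Lemma~\ref{lem:gnKaaupp} yields $\gn_s(H_b) < a-b+1$, and Lemma~\ref{lem:Kaaadde} ensures that every missing edge raises the guessing number to $a-b+1$. The small cases $a - b \in \{0,1\}$ will need direct verification, using Lemma~\ref{lem:2sat} or the empty graph as the base.

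Next, iterate Corollary~\ref{cor:vxaddsat} exactly $b$ times: at each step attach a new dominating vertex, which both preserves saturation and increases the saturation threshold by one. After $b$ such additions the resulting graph is $(\gn_s \geq a+1)$-saturated on $n$ vertices and is isomorphic to the join $K_b \oplus H_b$.

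Finally, an edge count confirms the formula: $H_b$ contributes $(a-b)^2 + 1$ edges (the $K_{a-b,a-b}$ has $(a-b)^2$ edges, and subdividing one of them introduces exactly one additional edge), the join with $K_b$ contributes $b(n-b)$ cross-edges, and the clique $K_b$ itself contributes $\binom{b}{2}$ internal edges, giving the claimed total $\binom{b}{2} + b(n-b) + (a-b)^2 + 1$. The main obstacle will be handling the boundary values of $b$ for which $a - b \leq 1$, since Theorem~\ref{sat:bound} is not directly applicable there; once these edge cases are established, the iterated dominating-vertex step is a routine application of Corollary~\ref{cor:vxaddsat}.
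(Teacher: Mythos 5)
Your proposal is essentially identical to the paper's proof: take the $(\gn\geq a-b+1)$-saturated graph on $n-b$ vertices furnished by Theorem~\ref{sat:bound}, apply Corollary~\ref{cor:vxaddsat} repeatedly to attach $b$ dominating vertices, and count edges of the resulting join. Your flag about the boundary cases $a-b\in\{0,1\}$ is a legitimate worry that the paper's one-line proof silently skips (for instance $K^*_{1,1}+E_{n-b-3}$ is not $(\gn\geq 2)$-saturated, since joining an isolated vertex to the center of the path yields a star, which still has guessing number $1$), so on those values of $b$ you are, if anything, more careful than the source.
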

 \begin{proof}
 By Theorem~\ref{sat:bound} there exists a graph on $n-b$ vertices and $(a-b)^2+1$ edges that is $(\gn\geq a-b+1)$-saturated. Then, by repeated use of Corollary~\ref{cor:vxaddsat}, we add $b$ dominating vertices to this graph to obtain a graph on $n$ vertices and $ \binom{b}{2}\cdot b \cdot (n-b)  + (a-b)^2+1 $ edges that is $(\gn\geq a+1)$-saturated.
 \end{proof}
 
 We note that the construction described in Corollary~\ref{cor:spectrum} encompasses both the construction used to find the exact extremal number in Theorem~\ref{thm:extremal} (by setting $b=a$) as well as the construction used to find an upper bound on the saturation number in Theorem~\ref{sat:bound} (by setting $b=0$). 
 
 \section{Guessing Number and Forbidden Subgraphs}\label{sec:Fgneq}
We conclude with a result that shows some of the relationship between the bounded guessing number property and forbidden subgraphs.
 
\begin{lemma}\label{lem:Fgneq}
For every $s \in \mathbb{N}$, $a \in \mathbb{R}$, there exists a unique finite family of minimal forbidden subgraphs $\mathcal{F}_{s,a}$ such that, for any graph $G$,
\[ \gn_s(G)< a \;\; \Leftrightarrow \;\; G \mbox{ is }\mathcal{F}_{s,a}\mbox{-free}. \]
\end{lemma}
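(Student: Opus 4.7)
My plan is to first verify that the property $\gn_s(G) < a$ is hereditary under subgraph inclusion, then define $\mathcal{F}_{s,a}$ to be the collection of subgraph-minimal graphs $F$ with $\gn_s(F) \geq a$, and finally prove this family is finite by uniformly bounding $|V(F)|$ for $F \in \mathcal{F}_{s,a}$. Monotonicity of $\gn_s$ under subgraph inclusion is immediate from Lemma~\ref{lem:subgraphadd} (take one summand to be empty). With heredity in hand, both directions of the biconditional $\gn_s(G) < a \Leftrightarrow G$ is $\mathcal{F}_{s,a}$-free are routine: if $\gn_s(G) \geq a$, then repeatedly deleting a vertex or edge while maintaining $\gn_s \geq a$ terminates at a minimal subgraph $F \subseteq G$ in $\mathcal{F}_{s,a}$; conversely, heredity implies that if $\gn_s(G) < a$ then no subgraph of $G$ can lie in $\mathcal{F}_{s,a}$. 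Uniqueness of $\mathcal{F}_{s,a}$ is automatic from the minimality characterization.

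The finiteness of $\mathcal{F}_{s,a}$ is the real content of the lemma. For any $F \in \mathcal{F}_{s,a}$, Lemma~\ref{lem:vxadd} together with minimality yields $a \leq \gn_s(F) < a+1$, and then Lemma~\ref{lem:cpalpha} gives $\alpha(F) \leq n - a$ and $\cp(F) \geq n - a$, where $n = |V(F)|$. Comparing vertex count to cover size in a minimum clique cover shows that at most $a$ of its cliques can have size $\geq 2$, so at most $2a$ vertices of $F$ lie in non-singleton cliques, and the singleton cliques of the cover necessarily form an independent set (otherwise two adjacent singletons could be merged into a $K_2$, contradicting minimality of the cover). Choosing any maximum independent set $I$ and setting $S = V(F) \setminus I$ then gives $|S| \leq 2a$.

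Next, I fix an optimal strategy $\mathcal{P}^* = \{f_u^*\}_{u \in V(F)}$ on $F$. Each $v \in I$ has $N(v) \subseteq S$, and its guessing function $f_v^*$ lies in a set of at most $s^{s^{|N(v)|}}$ possible functions $\Z_s^{N(v)} \to \Z_s$. The key claim, and the main obstacle, is a twin-reduction: if $v, v' \in I$ are false twins with $N(v) = N(v')$ and $f_v^* = f_{v'}^*$, then $c(v) = c(v')$ in every fixed point of $\mathcal{P}^*$, and one can produce a strategy $\mathcal{P}^{**}$ on $F - v$ by rewiring every $S$-neighbor $w$ of $v$ to substitute $c(v')$ in place of $c(v)$ in its guessing function. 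A short verification shows that the fixed points of $\mathcal{P}^{**}$ are in bijection with those of $\mathcal{P}^*$, giving $\gn_s(F - v) \geq \gn_s(F) \geq a$, which contradicts the minimality of $F$. Hence no two distinct vertices of $I$ can share both their neighborhood in $S$ and their guessing function in $\mathcal{P}^*$, so
\[
|I| \leq \sum_{T \subseteq S} s^{s^{|T|}} \leq 2^{|S|} \cdot s^{s^{|S|}} \leq 2^{2a} \cdot s^{s^{2a}},
\]
and $|V(F)| \leq 2a + 2^{2a}\cdot s^{s^{2a}}$. Since only finitely many non-isomorphic graphs fit under this bound, $\mathcal{F}_{s,a}$ is finite.

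The technical heart of the argument is the verification of the twin-reduction. One must check that the rewired strategy $\mathcal{P}^{**}$ is legal on $F - v$ (this uses that $v'$ is a false twin of $v$, so $v'$ remains visible to every $S$-neighbor of $v$ in $F - v$ and the substitution is well-defined), and that every fixed point of $\mathcal{P}^{**}$ extends uniquely to a fixed point of $\mathcal{P}^*$ by setting $c(v) := c(v')$, with the equality $f_v^* = f_{v'}^*$ ensuring that $v$'s own guessing constraint in $\mathcal{P}^*$ is automatically satisfied.
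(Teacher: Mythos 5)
Your proof is correct and follows the same overall skeleton as the paper's: heredity of $\gn_s(G)<a$ under subgraph removal, the sandwich $a\leq \gn_s(F)<a+1$ from Lemma~\ref{lem:vxadd} and minimality, a bound on $n-\alpha(F)$, and then a pigeonhole on the guessing functions of a maximum independent set. You diverge in two sub-steps, both legitimately. First, to bound $n-\alpha(F)$ the paper bounds the matching number by $\alpha'(F)\leq \gn_s(F)<a+1$ (via Lemmas~\ref{lem:subgraphadd} and~\ref{lem:kngn}) and invokes the external inequality $n\leq 2\alpha'+\alpha$; you instead count inside a minimum clique cover using $\cp(F)$ from Lemma~\ref{lem:cpalpha}, which is self-contained and avoids the citation. (One small slip: Lemma~\ref{lem:cpalpha} only gives $\cp(F)> n-\gn_s(F)> n-a-1$, not $\cp(F)\geq n-a$, since $\gn_s(F)$ may be as large as almost $a+1$; this turns your $|S|\leq 2a$ into $|S|\leq 2\lceil a\rceil$, matching the paper's $2(a+1)$, and does not affect finiteness.) Second, the paper pigeonholes on the guessing functions alone and must then handle two cases --- equal neighborhoods (delete a vertex) and unequal neighborhoods (observe the shared function cannot depend on a vertex outside either neighborhood, and delete an edge) --- whereas you pigeonhole on the pair (neighborhood in $S$, guessing function), which leaves only the false-twin vertex-deletion case at the cost of an extra factor $2^{|S|}$ in the bound. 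Your twin-reduction is spelled out more carefully than the paper's ``clearly,'' and your verification that the rewired strategy on $F-v$ preserves the number of fixed points is sound (it uses that $v$ and $v'$ are nonadjacent, being in the same independent set). Both approaches prove finiteness; the paper's gives a marginally smaller explicit bound, yours trades that for a single, cleaner reduction and no external reference.
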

\begin{proof}
First of all, note that the properties $\gn_s(G)< a$ and $\gn(G)< a$ are preserved by the removal of edges or vertices from $G$, and are therefore characterized by forbidden subgraphs. All that remains to be shown is that there are only finitely many graphs in the family $\mathcal{F}_{s,a}$. We will do this by showing that 
\begin{equation}\label{eq:Forderbound}
    |F| \leq s^{s^{2(a+1)}}+2(a+1), 
\end{equation} 
for any $F \in \mathcal{F}_{s,a} $. Suppose that we have some minimal forbidden subgraph $F \in \mathcal{F}_{s,a}$. Let $n=|F|$. By Lemma~\ref{lem:vxadd}, we must have that 
\[ a \leq  \gn_s (F) < a+1. \]
Otherwise, we would have $\gn_s(F-v)\geq a$ for any $v \in V(F)$ which contradicts minimality of $F$. Let $\alpha ' (F)$ indicate the cardinality of a maximum matching in $F$. By Lemma~\ref{lem:cpalpha}, we must have that
\[ \alpha' = \alpha'(F) <a+1\; \mbox{ and } \; \alpha=\alpha(F)<n-a.\]
It is shown in~\cite{willis2011bounds} that for any graph on $n$ vertices with independence and edge independence numbers $\alpha$ and $\alpha'$, respectively, we have
\begin{equation}\label{eq:alphasum}
     n \leq 2\alpha' + \alpha  .\end{equation}
Suppose, for the sake of contradiction, that $\alpha  > s^{s^{n-\alpha}}$.
Let $A$ be a maximum independent set of $F$. Each vertex $v_i$ in $A$ has at most $s^{s^{n-\alpha}}$ possible strategies $f_i$, since $|N(v_i)|\leq n-\alpha$. Let $\mathcal{P}=\{f_i\}_1^{n}$ be an optimal strategy on $F$. Then $\alpha  > s^{s^{n-\alpha}}$ implies, by the Pigeonhole Principle, that there are two vertices $v_i$ and $v_j$, both in $A$, such that $f_i=f_j$. If $N(v_i)=N(v_j)$, then clearly the graph $F-v_i$ has the same guessing number as $F$, which contradicts the minimality of $F$. Otherwise, without loss of generality, there exists a vertex $w \in N(v_i) \setminus N(v_j)$. Since the strategy of $v_j$ does not depend on the color of $w$, and $f_i=f_j$, we have that the strategy of $v_i$ does not depend on the color of $w$. Therefore, the strategy is $\mathcal{P}$ is valid on the graph $F-v_ix$, which contradicts the minimality of $F$. Therefore, we must have that $\alpha  \leq s^{s^{n-\alpha}}$. Combining this with Inequality~(\ref{eq:alphasum}) gives the result in Inequality~(\ref{eq:Forderbound}).
\end{proof}

Note that we can let $\mathcal{F}_{a}$ be such that for any graph $G$
\[ \gn(G)< a \;\; \Leftrightarrow \;\; G \mbox{ is }\mathcal{F}_{a}\mbox{-free}. \]
Then, $\mathcal{F}_{a}=\mathcal{F}_{2,a} \cup \mathcal{F}_{3,a} \cup \dots$ and it is not clear whether this family needs to be finite. We leave this to the reader as an open question.

\bibliographystyle{plain}
\bibliography{references.bib}

\begin{thebibliography}{10}

\bibitem{ahlswede2000network}
Rudolf Ahlswede, Ning Cai, S-YR Li, and Raymond~W Yeung.
\newblock Network information flow.
\newblock {\em IEEE Transactions on information theory}, 46(4):1204--1216,
  2000.

\bibitem{alon2008broadcasting}
Noga Alon, Eyal Lubetzky, Uri Stav, Amit Weinstein, and Avinatan Hassidim.
\newblock Broadcasting with side information.
\newblock In {\em 2008 49th Annual IEEE Symposium on Foundations of Computer
  Science}, pages 823--832. IEEE, 2008.

\bibitem{atkins_guessing_2017}
Ross Atkins, Puck Rombach, and Fiona Skerman.
\newblock Guessing numbers of odd cycles.
\newblock {\em The Electronic Journal of Combinatorics}, 23(1), 2017.

\bibitem{butler_hat_2009}
Steve Butler, Mohammad~T. Hajiaghayi, Robert~D. Kleinberg, and Tom Leighton.
\newblock Hat guessing games.
\newblock {\em {SIAM} Review}, 51(2):399--413, 2009.

\bibitem{cameron_guessing_2016}
Peter~J. Cameron, Anh~N. Dang, and Soren Riis.
\newblock Guessing games on triangle-free graphs.
\newblock {\em The Electronic Journal of Combinatorics}, 23(1), 2016.

\bibitem{christofides_guessing_2011}
Demetres Christofides and Klas Markstr{\"o}m.
\newblock The guessing number of undirected graphs.
\newblock {\em The Electronic Journal of Combinatorics}, 18(P192), 2011.

\bibitem{erdos1964problem}
Paul Erd\H{o}s, Andr{\'a}s Hajnal, and John~W Moon.
\newblock A problem in graph theory.
\newblock {\em The American Mathematical Monthly}, 71(10):1107--1110, 1964.

\bibitem{faudree2011survey}
Jill~R Faudree, Ralph~J Faudree, and John~R Schmitt.
\newblock A survey of minimum saturated graphs.
\newblock {\em The Electronic Journal of Combinatorics}, (DS19-Jul), 2011.

\bibitem{gadouleau2018possible}
Maximilien Gadouleau.
\newblock On the possible values of the entropy of undirected graphs.
\newblock {\em Journal of Graph Theory}, 88(2):302--311, 2018.

\bibitem{gadouleau_graph-theoretical_2011}
Maximilien Gadouleau and S{\o}ren Riis.
\newblock Graph-theoretical constructions for graph entropy and network coding
  based communications.
\newblock {\em IEEE Transactions on Information Theory}, 57(10):6703--6717,
  2011.

\bibitem{kaszonyi_saturated_1986}
László Kászonyi and Zs~Tuza.
\newblock Saturated graphs with minimal number of edges.
\newblock 10(2):203--210, 1986.

\bibitem{mantel_problem_1907}
Willem Mantel.
\newblock Problem 28.
\newblock {\em Wiskundige Opgaven}, 10(60-61):320, 1907.

\bibitem{reinhard_diestel_graph_2016}
{Reinhard Diestel}.
\newblock {\em Graph Theory}.
\newblock Number 173 in Graduate Texts in Mathematics. Springer-Verlag, 5th
  edition, 2016.

\bibitem{riis_utilising_2005}
S{\o}ren Riis.
\newblock Utilising public information in network coding.
\newblock Technical report, Queen Mary, University of London, 2005.

\bibitem{riis_graph_2007}
S{\o}ren Riis.
\newblock Graph entropy, network coding and guessing games.
\newblock {\em arXiv preprint arXiv:0711.4175}, 2007.

\bibitem{riis_information_2006}
S{\o}ren Riis.
\newblock Information flows, graphs and their guessing numbers.
\newblock {\em The Electronic Journal of Combinatorics}, pages R44--R44, 2007.

\bibitem{shannon_mathematical_1948}
Claude~Elwood Shannon.
\newblock A mathematical theory of communication.
\newblock 27(3):379--423, 1948.

\bibitem{sidorenko1995we}
Alexander Sidorenko.
\newblock What we know and what we do not know about {T}ur{\'a}n numbers.
\newblock {\em Graphs and Combinatorics}, 11(2):179--199, 1995.

\bibitem{turan1941extremal}
Paul Tur{\'a}n.
\newblock On an extremal problem in graph theory.
\newblock {\em Matematikai \'{e}s Fizikai Lapok}, 48:436--452, 1941.

\bibitem{willis2011bounds}
William Willis.
\newblock Bounds for the independence number of a graph.
\newblock 2011.

\bibitem{wu_guessing_2009}
Taoyang Wu, Peter Cameron, and S{\o}ren Riis.
\newblock On the guessing number of shift graphs.
\newblock {\em Journal of Discrete Algorithms}, 7(2):220--226, 2009.

\end{thebibliography}
\end{document}